\documentclass[reqno]{amsart}
\usepackage[bookmarks=false]{hyperref} 
\usepackage{amssymb}
\usepackage{amsmath}
\usepackage{amsthm} 
\usepackage{mathtools}
\usepackage{color} 
\usepackage[svgnames]{xcolor}
\usepackage[utf8]{inputenc}
\usepackage[T1]{fontenc}
\usepackage{graphicx}
\usepackage{placeins}
\usepackage{vmargin}
\usepackage{setspace}

\hypersetup{
  colorlinks=true,
  linkcolor=Blue  ,          
  citecolor=Red,        
  filecolor=Magenta,      
  urlcolor=Green,           
pdfpagemode=UseOutlines,
pdftitle={},
pdfauthor={Tin Van Phan <van-tin.phan@univ-tlse3.fr>},
pdfsubject={Infinite soliton for derivative nonlinear Schr\"odinger equation},
pdfkeywords={soliton, Schr\"oginer equation} 
}

\numberwithin{equation}{section}
\numberwithin{figure}{section}

\newtheorem{theorem}{Theorem}[section]
\newtheorem{proposition}[theorem]{Proposition}
\newtheorem{lemma}[theorem]{Lemma}

\newtheorem*{assumption a}{Assumption A}
\newtheorem*{assumption b}{Assumption B}
\newtheorem*{assumption c}{Assumption C}

\theoremstyle{definition}

\theoremstyle{remark}
\newtheorem{remark}[theorem]{Remark}

\DeclarePairedDelimiter{\norm}{\lVert}{\rVert}




\newcommand{\N}{\mathbb{N}}
\newcommand{\R}{\mathbb{R}}
\newcommand{\C}{\mathbb C}


\renewcommand{\leq}{\leqslant}
\renewcommand{\geq}{\geqslant}

\DeclareMathAlphabet{\mathpzc}{OT1}{pzc}{m}{it}
\renewcommand{\Re}{\mathcal R\!\mathpzc{e}}
\renewcommand{\Im}{\mathcal I\!\mathpzc{m}}




\begin{document}

\title[On GDNLS]{On a generalized derivative nonlinear Schr\"odinger equation}

\author[Phan Van Tin]{Phan Van Tin}
\address[Phan Van Tin]{LAGA (UMR 7539),
\newline\indent
Institut Galil\'ee, Universit\'e Sorbonne Paris Nord,
\newline\indent
  99 avenue Jean-Baptiste Cl\'ement,
  \newline\indent
  93430 Villetaneuse,
  France}
\email[Phan Van Tin]{vantin.phan@math.univ-paris13.fr}

\subjclass[]{35Q55}

\date{\today}
\keywords{Derivative nonlinear Schr\"odinger equations, scattering, wave operator}

\begin{abstract}
We consider a generalized derivative nonlinear Schr\"odinger equation. We prove existence of wave operator under an explicit smallness of the given asymptotic states. Our method bases on studying the associated system used in \cite{Tinpaper4}. Moreover, we show that if the initial data is small enough in $H^2(\R)$ then the associated solution scatters up to a Gauge transformation in sense of Theorem \ref{thm2}. 
\end{abstract}

\maketitle

\tableofcontents

\section{Introduction}

We consider the following equation
\begin{equation}\label{eq1}
iu_t+u_{xx}+i|u|^{2\sigma}u_x=0,
\end{equation}
for given $\sigma>0$, $u:\R_t\times\R_x\rightarrow\C$ is unknown function and $u_x$ denotes the derivative in space of the function $u$.\\

Local well posedness of \eqref{eq1} was studied in \cite{HaOz16} on general domain. More precisely, for $\sigma\geq 1$, the authors proved that \eqref{eq1} is local well posedness in $H^1$ and global in time if initial data is small enough. Furthermore, in the case $0<\sigma<1$, there exists a solution in $(C_{w}\cap L^{\infty})(\R,H^1)$ for given initial data in $H^1$. \\

In the special case $\sigma=1$, \eqref{eq1} is known to be completely integrable. Global well posedness was studied in \cite{Wu15}. The author proved that for each initial data in $H^1(\R)$ with the mass bounded by $4\pi$, the associated solution is global and uniformly bounded in $H^1(\R)$. Later, in \cite{FuHaIn17}, the authors improved this result by using variational argument. Moreover, the authors showed that the solutions of \eqref{eq1} is global if its mass equals $4\pi$ and its momentum is negative. Using integrability of \eqref{eq1}, solutions of \eqref{eq1} were proved global in $H^{2,2}(\R)$ by \cite{JeLiPeSu18} and in a subset of $H^2(\R)\cap H^{1,1}(\R)$ by \cite{PeSh18}. To our knowledge, the best result is given in \cite{BaPe22} showing that solutions to \eqref{eq1} are global in $H^{\frac{1}{2}}(\R)$ and its $H^{\frac{1}{2}}(\R)$ norm remains globally bounded in time. \\

The equation \eqref{eq1} admits two parameters family of solitary waves solutions (or solitons) $u_{\omega,c}$ for $c\in (-2\sqrt{\omega},2\sqrt{\omega})$. Stability and instability of such solutions were studied in many works (see for instance \cite{MiTaXu23}, \cite{Gu18}, \cite{LiSiSu13}, \cite{GuNiWu20}, \cite{Fu17}, \cite{CoOh06}, \cite{Hayashi22}, \cite{KwWu18}, \cite{NiOhWu17}, \cite{Ning20}, \cite{Kim24} and references therein). It turns out that stability and instability of solitons depend on parameters $\sigma,\omega,c$. More precisely, solitons are unstable if $\sigma\geq 2$, stable if $0<\sigma\leq 1$. When $\sigma\in (1,2)$, there exists $z_0\in (-1,1)$ such that solitons are stable if $-2\sqrt{\omega}<c<2z_0\sqrt{\omega}$ and unstable if $2z_0\sqrt{\omega}\leq c<2\sqrt{\omega}$. There are special solutions, called \textit{multi-solitons}, of \eqref{eq1} which behave in large time like sum of finite solitons \cite{Tinpaper3}, \cite{Tinpaper4}. Stability of such solutions was studied in \cite{CoWu18} when $\sigma=1$ and \cite{TaXu18} when $\sigma\in (1,2)$. More precisely, the authors proved that sum of finite numbers of stable solitons is stable in some sense.  \\

Scattering theory of \eqref{eq1} was first studied in \cite{BaWuXu20}. The authors showed that when $\sigma\geq 2$, solutions are global and scattering when the initial data is small in $H^s$ for $\frac{1}{2}\leq s\leq 1$. Moreover, when $0<\sigma<2$, there exists a class of solitons which are close arbitrary zero, this is against the small data scattering statement. Relating to scattering theory, in \cite{Oz96,HaOz1994}, the authors proved existence of modified wave operator when $\sigma=1$ under a small condition on the asymptotic state given at infinity. Moreover, the modified scattering operator for derivative nonlinear Schr\"odinger equation was proved in \cite{GuHaLiNa13}. Recently, in \cite{baishen23}, the authors proved existence of wave operator by given asymptotic state in $H^1$ for all $\sigma\in\N$, $\sigma\geq 3$. The authors used a Gauge transformation and the normal form method to deal with the appearance of the derivative term in the nonlinearity. In this paper, we prove existence of wave operator under an explicit smallness of given asymptotic state by another approach, specially, we remove the assumption $\sigma\in\N$ used in \cite{baishen23}. \\

The equation \eqref{eq1} admits three conserved quantities:
\begin{align*}
\text{(Energy)}&\quad E(u)=\frac{1}{2}\norm{u_x}^2_{L^2}-\frac{1}{2\sigma+2}\Re\int_{\R} i|u|^{2\sigma}\overline{u}u_x dx,\\
\text{(Mass)}&\quad M(u)=\norm{u}^2_{L^2},\\
\text{(Momentum)}&\quad P(u)=\Re\int_{\R}iu_x\overline{u}dx.
\end{align*}
Let $(\omega,c) \in \R^+\times\R$ be such that
\begin{equation}
\label{eq:condition on parameters}
-2\sqrt{\omega}<c< 2\sqrt{\omega}.
\end{equation}
Define, for all $\varphi\in H^1(\R)$:
\begin{align*}
S_{\omega,c}(\varphi)&=E(\varphi)+\frac{\omega}{2}M(\varphi)+\frac{c}{2}P(\varphi)\\
K_{\omega,c}(\varphi)&=\norm{\varphi_x}^2_{L^2}+\omega\norm{\varphi}^2_{L^2}+cP(\varphi)-\Re\int_{\R}i|\varphi|^{2\sigma}\overline{\varphi}\varphi_x dx,\\
\mu(\omega,c)&=\inf\left\{S_{\omega,c}(\varphi):\varphi\in H^1(\R)\setminus\{0\}, K_{\omega,c}(\varphi)= 0 \right\},\\
\mathcal{K}_{\omega,c}&=\left\{\varphi\in H^1(\R): S_{\omega,c}(\varphi)\leq \mu(\omega,c), K_{\omega,c}(\varphi)\geq 0\right\},\\
\mathcal{K}&=\underset{\substack{-2\sqrt{\omega}<c<2\sqrt{\omega} \\ \omega>0}}{\bigcup} \mathcal{K}_{\omega,c}.
\end{align*}
In \cite{FuHaIn17}, the authors proved that under sufficient conditions, solutions of \eqref{eq1} are global. More precisely, we have the following result:
\begin{theorem}[\cite{FuHaIn17}]
\label{thm:global}
Let $\sigma\geq 1$ and $(\omega,c)$ satisfy \eqref{eq:condition on parameters}. If the initial data $u_0\in \mathcal{K}_{\omega,c}$ then the associated solution of \eqref{eq1} is global and its $H^1$-norm is uniformly bounded in time. Specially, if $u_0\in\mathcal{K}$ then the associated solution of \eqref{eq1} is global.
\end{theorem}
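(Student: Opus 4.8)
The plan is to run the standard potential--well (invariant--set) argument, exploiting that $S_{\omega,c}$ is conserved along the flow and that $K_{\omega,c}$ is precisely the derivative of $S_{\omega,c}$ along the amplitude scaling $\varphi\mapsto\lambda\varphi$. Two elementary algebraic facts set everything up. Writing $Q(\varphi)=\|\varphi_x\|_{L^2}^2+\omega\|\varphi\|_{L^2}^2+cP(\varphi)$ and $N(\varphi)=\Re\int_{\R}i|\varphi|^{2\sigma}\overline{\varphi}\varphi_x\,dx$, one reads off $S_{\omega,c}(\varphi)=\tfrac12 Q(\varphi)-\tfrac{1}{2\sigma+2}N(\varphi)$ and $K_{\omega,c}(\varphi)=Q(\varphi)-N(\varphi)$, so the nonlinear parts cancel in the combination
\[
S_{\omega,c}(\varphi)-\frac{1}{2\sigma+2}K_{\omega,c}(\varphi)=\frac{\sigma}{2(\sigma+1)}\,Q(\varphi).
\]
Moreover the parameter condition $-2\sqrt\omega<c<2\sqrt\omega$ makes $Q$ coercive: since $|P(\varphi)|\le\|\varphi_x\|_{L^2}\|\varphi\|_{L^2}$ and, with $s:=|c|/(2\sqrt\omega)<1$, $|c|\,\|\varphi_x\|_{L^2}\|\varphi\|_{L^2}\le s\big(\|\varphi_x\|_{L^2}^2+\omega\|\varphi\|_{L^2}^2\big)$, one obtains $Q(\varphi)\ge\delta\|\varphi\|_{H^1}^2$ for some $\delta=\delta(\omega,c)>0$.

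Next I would prove that $\mathcal{K}_{\omega,c}$ is invariant under the flow. Let $u_0\in\mathcal{K}_{\omega,c}\setminus\{0\}$ and let $u\in C([0,T_{\max}),H^1)$ be the maximal solution given by the $H^1$ local theory. Because $E$, $M$, $P$ are conserved, $S_{\omega,c}(u(t))=S_{\omega,c}(u_0)\le\mu(\omega,c)$ for all $t$, so the level-set condition persists automatically; the crux is that $K_{\omega,c}(u(t))\ge 0$ persists. Suppose not. Continuity of $t\mapsto K_{\omega,c}(u(t))$ (from continuity of the $H^1$ flow) produces a time $t_*$ with $K_{\omega,c}(u(t_*))=0$, while mass conservation gives $u(t_*)\neq 0$. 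By the definition of $\mu$ this forces $S_{\omega,c}(u(t_*))\ge\mu(\omega,c)$, which together with $S_{\omega,c}(u(t_*))=S_{\omega,c}(u_0)\le\mu(\omega,c)$ yields equality and makes $u(t_*)$ a minimizer. When $S_{\omega,c}(u_0)<\mu(\omega,c)$ this is a clean contradiction, so $K_{\omega,c}(u(t))>0$ throughout.

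On the invariant set the a priori bound is then immediate: combining the two preliminary facts with $K_{\omega,c}(u(t))\ge 0$ gives
\[
\frac{\sigma\delta}{2(\sigma+1)}\|u(t)\|_{H^1}^2\le S_{\omega,c}(u(t))-\frac{1}{2\sigma+2}K_{\omega,c}(u(t))\le S_{\omega,c}(u_0)\le\mu(\omega,c),
\]
so $\|u(t)\|_{H^1}$ is bounded uniformly in $t$ by a constant depending only on $(\omega,c)$. The blow-up alternative of the local theory then forces $T_{\max}=\infty$ (and likewise for negative times), giving a global, uniformly $H^1$-bounded solution. The final assertion is a triviality: if $u_0\in\mathcal{K}$ then $u_0\in\mathcal{K}_{\omega,c}$ for some admissible pair $(\omega,c)$, and the preceding argument applies verbatim with that pair.

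I expect the invariance step to be the only genuine obstacle, and within it the borderline case $S_{\omega,c}(u_0)=\mu(\omega,c)$ — where the flow could a priori touch the minimizer set without a contradiction from the energy inequality alone. This case must be treated separately, e.g.\ by noting that the minimizers of $\mu(\omega,c)$ are exactly the solitary-wave profiles $u_{\omega,c}$, which are themselves global standing waves, or by a refined argument excluding escape through $\{K_{\omega,c}=0\}$; the remaining content of the proof is purely algebraic together with the standard blow-up alternative.
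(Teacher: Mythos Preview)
The paper does not prove this theorem; it is quoted verbatim from \cite{FuHaIn17} and used only as a black box in Step~2 of the proof of Theorem~\ref{thm1}. There is therefore no proof in the present paper to compare your proposal against.

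That said, your outline is correct and is precisely the potential-well argument of \cite{FuHaIn17}: the identity $S_{\omega,c}-\tfrac{1}{2\sigma+2}K_{\omega,c}=\tfrac{\sigma}{2(\sigma+1)}Q$, coercivity of $Q$ from $|c|<2\sqrt{\omega}$, invariance of $\mathcal K_{\omega,c}$ via conservation of $S_{\omega,c}$ and continuity of $t\mapsto K_{\omega,c}(u(t))$, and the $H^1$ blow-up alternative. Your identification of the threshold case $S_{\omega,c}(u_0)=\mu(\omega,c)$ as the only delicate point is accurate, and the resolution you sketch --- that minimizers of $\mu(\omega,c)$ are exactly the soliton profiles $\phi_{\omega,c}$, so if the flow ever reaches $\{K_{\omega,c}=0\}$ at the threshold level then uniqueness of the Cauchy problem forces $u$ to coincide with a soliton for all time, hence to be global --- is the one used in \cite{FuHaIn17}.
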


In \cite{HaOz94,Ha93,HaOz92}, the authors used a Gauge transformation to construct solutions to derivative nonlinear Schr\"odinger equations by given initial data. Using these Gauge transformations, we show that there exists a solution scattering to a given asymptotic state. This solution is well defined in large time. To show that the wave operator is well defined, we need to prove that this solution can extend at least to zero and exists uniquely in some sense. Using Theorem \ref{thm:global}, under suitable conditions of asymptotic state, the solution scattering to this state exists globally in time. \\

Our first main result is the following.
\begin{theorem}\label{thm1}
Let $\sigma> 2$ and $u^+ \in H^3(\R) \cap W^{2,1}(\R)$ be a given function. There exists a solution $u$ of \eqref{eq1} scatters forward to $u^+$. The convergence is in polynomial rate, more precisely, for some $T_0\gg 1$:
\begin{equation}
\label{eq:rate of scattering}
\norm{u-e^{it\partial_x^2}u^+}_{H^1}\lesssim t^{1-\sigma},\quad \forall t\geq T_0.
\end{equation}
Assume that $u^+$ satisfies 
\begin{equation}
\label{eq:condition on u^+}
\norm{\partial_x u^+}^2_{L^2}+\omega \norm{u^+}^2_{L^2}<2\mu(\omega,0),
\end{equation}
for some $\omega>0$ then the associated solution $u$ of \eqref{eq1} is global. Furthermore, if $v\in C(\R,H^1)\cap L^4_{\text{loc}}(\R,W^{1,\infty})$ is a solution of \eqref{eq1} such that $\norm{v_x}_{L^{\infty}_x(\R)}$ is uniformly bounded on $[T_0,\infty)$ and $v$ scatters to $u^+$ in the sense of \eqref{eq:rate of scattering} then $u(t)=v(t)$ for all $t\in\R$. In this case, the wave map: $u^+ \mapsto u_0=u(0)$ is well defined.  
\end{theorem}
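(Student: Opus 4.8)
The plan is to eliminate the derivative loss in the nonlinearity $i\abs{u}^{2\sigma}u_x$ by the Gauge transformation of \cite{HaOz94,Ha93,HaOz92}, passing to the associated system of \cite{Tinpaper4}, and then to build the scattering solution by a contraction argument run backward from $t=+\infty$. Writing $v=\mathcal G(u)=u\exp\!\big(ic_\sigma\int_{-\infty}^x\abs{u}^{2\sigma}\,dy\big)$ for an appropriate constant $c_\sigma$, the role of $\mathcal G$ is that $v$ solves a Schr\"odinger equation $iv_t+v_{xx}=N(v)$ whose nonlinearity $N(v)$ no longer differentiates the unknown: the derivative term is absorbed into the nonlocal phase, leaving $N(v)$ algebraically of degree $2\sigma+1$ (a pure power plus nonlocal lower-order terms). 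Since $\abs{u}^{2\sigma}\in L^1_x$ decays as $t\to\infty$, the phase is asymptotically trivial, so $v$ and $u$ share the asymptotic state $u^+$, and scattering of $u$ to $u^+$ is equivalent to scattering of $v$.

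First I would fix $T_0\gg1$ and seek $v$ as a fixed point of
\[
\Phi(v)(t)=e^{it\partial_x^2}u^+ + i\int_t^\infty e^{i(t-s)\partial_x^2}N(v(s))\,ds
\]
in the complete metric space
\[
X=\left\{v\in C([T_0,\infty),H^1):\norm{v(t)-e^{it\partial_x^2}u^+}_{H^1}\leq t^{1-\sigma}\ \text{for all }t\geq T_0\right\}.
\]
The dispersive bound $\norm{e^{it\partial_x^2}u^+}_{L^\infty}\lesssim t^{-1/2}$, valid because $u^+\in W^{2,1}$ (hence $u^+,\partial_xu^+\in L^1$), yields $\norm{N(v(s))}_{H^1}\lesssim s^{-\sigma}$; integrating from $t$ to $\infty$ produces the gain $t^{1-\sigma}$ and, for $\sigma>2$, a contraction factor made $<1$ by enlarging $T_0$. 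The regularity $u^+\in H^3$ is used to control the derivatives generated by the nonlocal phase when estimating $N(v)$ in $H^1$, and $\sigma>2$ is what forces all these time integrals to converge with room to spare. Inverting $\mathcal G$ returns the desired $u$ on $[T_0,\infty)$ together with the rate \eqref{eq:rate of scattering}.

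For global existence I would pass the scattering data through the conserved quantities. The nonlinear part of $E$ is controlled by $\norm{u}_{L^\infty}^{2\sigma}\norm{u}_{L^2}\norm{u_x}_{L^2}\to0$, and the free flow preserves $\norm{u^+}_{L^2}$ and $\norm{\partial_xu^+}_{L^2}$, so conservation of $E$ and $M$ forces $E(u_0)=\tfrac12\norm{\partial_xu^+}^2_{L^2}$ and $M(u_0)=\norm{u^+}^2_{L^2}$. Hence $S_{\omega,0}(u_0)=\tfrac12\big(\norm{\partial_xu^+}^2_{L^2}+\omega\norm{u^+}^2_{L^2}\big)<\mu(\omega,0)$ by \eqref{eq:condition on u^+}, while the same decay gives $K_{\omega,0}(u(t))\to\norm{\partial_xu^+}^2_{L^2}+\omega\norm{u^+}^2_{L^2}>0$, so $u(t)\in\mathcal K_{\omega,0}$ for $t$ large. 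Since $S_{\omega,0}$ is conserved and strictly below $\mu(\omega,0)$, the continuous quantity $K_{\omega,0}(u(t))$ cannot vanish (a zero would put a nonzero function at the level $\mu(\omega,0)$, contradicting the definition of $\mu$), so it stays positive and $u(t)\in\mathcal K_{\omega,0}$ wherever it exists. Theorem \ref{thm:global} then delivers a global solution with uniformly bounded $H^1$-norm; in particular $u$ extends to $t=0$ and $u^+\mapsto u(0)$ is defined.

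Finally, for uniqueness I would compare $u$ with a second solution $v$ through the same Duhamel-from-$+\infty$ identity: the hypotheses that $\norm{v_x}_{L^\infty_x}$ is uniformly bounded on $[T_0,\infty)$ and $v\in L^4_{\mathrm{loc}}(\R,W^{1,\infty})$ supply exactly the derivative control needed to estimate the difference of nonlinearities, and the rate \eqref{eq:rate of scattering} places $v$ near $X$; a Gronwall estimate on $[T,\infty)$ for $T$ large gives $u=v$ there, after which backward uniqueness for the $H^1$ Cauchy problem propagates equality to all of $\R$. The main obstacle throughout is the derivative loss in $i\abs{u}^{2\sigma}u_x$: a naive $H^1$ estimate of the Duhamel term would require controlling $u_{xx}$, which $e^{it\partial_x^2}$ does not provide with integrable-in-time decay. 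Trading this loss for the nonlocal phase via $\mathcal G$, and verifying that $\mathcal G$ is well defined, invertible and bounded on the weighted space $X$ (using the smoothness of $\abs{u}^{2\sigma}$, which needs $\sigma$ large), is the technical heart of the argument.
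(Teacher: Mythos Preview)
Your outline captures the overall architecture but glosses over the central technical obstacle, and as written the contraction does not close. The single Gauge transform $v=\mathcal G(u)=u\exp\!\big(ic_\sigma\int_{-\infty}^x|u|^{2\sigma}\big)$ does \emph{not} produce a nonlinearity free of derivatives for general $\sigma>2$: a direct computation (carried out in the paper) gives $Lv=P\big(v,\partial_xv-\tfrac{i}{2}|v|^{2\sigma}v\big)$, where $P$ is algebraic in its two arguments but the second argument contains $\partial_x v$. Thus $N(v)$ still carries a first-order derivative, and estimating $\partial_xN(v)$ in $L^2$ forces a $v_{xx}$ term, so your contraction in $H^1$ cannot close. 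The special case $\sigma=1$, where one can reach a pure-power NLS, is misleading here.

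The paper resolves this by passing to the \emph{pair} $\varphi=G_1(u)$, $\psi=G_2(u)$ and treating them as \emph{independent} unknowns in the system $L\varphi=P(\varphi,\psi)$, $L\psi=Q(\varphi,\psi)$, whose right-hand side is genuinely derivative-free in $(\varphi,\psi)$. The contraction is run in $H^1\times H^1$ for the difference $(\tilde\varphi,\tilde\psi)=(\varphi-h,\psi-k)$, where $(h,k)$ are the Gauge transforms of $R=e^{it\partial_x^2}u^+$; the $H^3\cap W^{2,1}$ regularity is spent on showing the source $(m,n)$ satisfies $\|(m,n)\|_{H^1}\lesssim t^{-\sigma}$. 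But this introduces a new difficulty you do not address: the fixed point $(\tilde\varphi,\tilde\psi)$ has \emph{no a priori reason} to satisfy the compatibility $\psi=\partial_x\varphi-\tfrac{i}{2}|\varphi|^{2\sigma}\varphi$, and without it one cannot invert the Gauge to recover a solution of \eqref{eq1}. The paper devotes a separate lemma to this: setting $\tilde\kappa=\partial_x\tilde\varphi-\tfrac{i}{2}\big(|\varphi|^{2\sigma}\varphi-|h|^{2\sigma}h\big)$, one derives an evolution inequality for $\|\tilde\psi-\tilde\kappa\|_{L^2}^2$ with time-integrable coefficients and uses that both $\tilde\psi$ and $\tilde\kappa$ decay like $t^{1-\sigma}$ to conclude $\tilde\psi=\tilde\kappa$. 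This is the actual ``technical heart''; your last paragraph locates it instead in the well-definedness of $\mathcal G$, which is not where the difficulty lies.

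Your treatment of global existence via $\mathcal K_{\omega,0}$ and of uniqueness are correct in spirit and match the paper. For uniqueness the paper works directly on \eqref{eq1}: it estimates $\partial_t\|u-w\|_{L^2}^2$ and integrates by parts to convert $|w|^{2\sigma}(u_x-w_x)\overline{(u-w)}$ into $\partial_x(|w|^{2\sigma})|u-w|^2$, which is exactly where the uniform bound on $\|w_x\|_{L^\infty_x}$ is used.
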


\begin{remark}
If $K_{\omega,0}(\varphi)=0$ then $$S_{\omega,0}(\varphi)=\left(\frac{1}{2}-\frac{1}{2\sigma+2}\right)\int (|\partial_x\varphi|^2+\omega |\varphi|^2) dx=\frac{\sigma}{2\sigma+2}\int (|\partial_x\varphi|^2+\omega |\varphi|^2) dx.$$
As in \cite{FuHaIn17}, we have 
$$\mu(\omega,0)=S_{\omega,0}(\phi_{\omega,0})=\frac{\sigma}{2\sigma+2}\int (|\partial_x\phi_{\omega,0}|^2+\omega|\phi_{\omega,0}|^2) dx>0,$$
where 
$$\phi_{\omega,0}^{2\sigma}=2\sqrt{\omega}(\sigma+1)\cosh^{-1}(2\sqrt{\omega}\sigma x).$$
Thus, the condition \eqref{eq:condition on u^+} is equivalent to 
$$\norm{\partial_x u^+}^2_{L^2}+\omega\norm{u^+}^2_{L^2}<\frac{2\sigma}{2\sigma+2}\int (|\partial_x\phi_{\omega,0}|^2+\omega|\phi_{\omega,0}|^2)dx.$$
\end{remark}

\begin{remark}
The restriction on $\sigma$ in the above theorem (i.e $\sigma>2$) is required to ensure that $s^{1-\sigma} \in L^1(1,\infty)$. This implies that the right hand side of \eqref{eq 99} tends to zero as $N$ tends to infinity. Thus, we obtain the desired relation for the solution given in Step 1 of the proof of Theorem \ref{thm1}, which plays an important role in the proof.  
\end{remark}

\begin{remark}
The restriction on regularity of the final state in Theorem \ref{thm1} make weaker our result. In \cite{baishen23}, the authors only use condition $u^+ \in H^1(\R)$.
\end{remark}

For $u\in H^1$, we define the Gauge transformation of $u$ by
\begin{align*}
G_1(u)&=\exp\left(\frac{i}{2}\int_{-\infty}^x|u(t,y)|^{2\sigma}\,dy\right)u(t,x),\\
G_2(u)&=\exp\left(\frac{i}{2}\int_{-\infty}^x|u(t,y)|^{2\sigma}\,dy\right)\partial_x u(t,x).
\end{align*}

In the case when the initial data is small in $H^2(\R)$, we show that the associated solution scatters up to the Gauge transformation in the sense of Theorem \ref{thm2}. Our second result is the following.
\begin{theorem}\label{thm2}
Let $\sigma\geq 3$, $u_0\in H^2$ be such that $\norm{u_0}_{H^2}$ is small enough and $u$ be the associated solution of \eqref{eq1} with initial data $u_0$. Then there exists a unique $\varphi^{\pm},\psi^{\pm} \in H^1$ such that
\[
\norm{G_1(u)-e^{it\partial_x^2}\varphi^{\pm}}_{H^1}+\norm{G_2(u)-e^{it\partial_x^2}\psi^{\pm}}_{H^1}\rightarrow 0,
\]
as $t\rightarrow\pm\infty$.
\end{theorem}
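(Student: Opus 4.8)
The plan is to pass to the Gauge variables $v=G_1(u)$ and $w=G_2(u)$ and reduce the claimed scattering to a standard small-data scattering statement for the Schrödinger system they solve. Writing $\phi=\tfrac12\int_{-\infty}^x|u|^{2\sigma}\,dy$, one checks that $e^{i\phi}$ is tuned precisely so that the term $2i\phi_x u_x$ generated by $v_{xx}$ cancels the dangerous derivative nonlinearity $i|u|^{2\sigma}u_x$ in \eqref{eq1}. Hence (this is the system of \cite{Tinpaper4}) the pair $(v,w)$ solves
\[
i v_t + v_{xx} = \mathcal N_1(v,w),\qquad i w_t+w_{xx}=\mathcal N_2(v,w),
\]
and, using the algebraic relation $v_x = w + \tfrac{i}{2}|v|^{2\sigma}v$ to eliminate every derivative of $v$, the right-hand sides $\mathcal N_1,\mathcal N_2$ become genuinely algebraic (local and nonlocal) expressions in $(v,w,\bar v,\bar w)$ of total degree at least $2\sigma+1$ — for instance $|v|^{4\sigma}v$ coming from $\phi_x^2 v$, the term $|v|^{2\sigma-2}\Re(\bar v w)\,v$ coming from $\phi_{xx}v$, and the nonlocal contributions from $\phi_t=\tfrac12\int_{-\infty}^x\partial_t|u|^{2\sigma}$, which after one integration by parts are again of degree $\geq 2\sigma+1$. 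Since $|u|=|v|$ and $u_x=e^{-i\phi}w$, the smallness of $\|u_0\|_{H^2}$ transfers to smallness of $\|v(0)\|_{H^1}+\|w(0)\|_{H^1}$ (the Gauge factor is bounded and its derivatives cost only high powers of $u$), so it suffices to prove small-data scattering in $H^1\times H^1$ for this system.

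Next I would run the standard Strichartz fixed point/bootstrap on the whole line. Using the one-dimensional admissible pair $(4,\infty)$ and the inhomogeneous Strichartz estimate at the $H^1$ level, I construct the solution in the global space
\[
X=\bigl\{(v,w):\ \|v\|_{L^\infty_t H^1}+\|v\|_{L^4_t W^{1,\infty}_x}+\|w\|_{L^\infty_t H^1}+\|w\|_{L^4_t W^{1,\infty}_x}\le \delta\bigr\},
\]
with $\delta$ controlled by the (small) data; the contraction closes because every monomial of $\mathcal N_1,\mathcal N_2$ carries at least $2\sigma$ extra factors of the solution and hence at least $2\sigma$ powers of the decaying norm $\|\cdot\|_{L^4_t W^{1,\infty}_x}$. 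This simultaneously yields global existence and global space-time bounds, and by uniqueness for \eqref{eq1} the solution produced coincides with $(G_1(u),G_2(u))$.

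The asymptotic states are then produced by the usual Cauchy argument. Since $e^{-it\partial_x^2}$ is unitary on $H^1$, for $t'<t$ one has $\|e^{-it\partial_x^2}v(t)-e^{-it'\partial_x^2}v(t')\|_{H^1}\le\int_{t'}^t\|\mathcal N_1(v,w)(s)\|_{H^1}\,ds$, and I bound the right-hand side by the dual Strichartz norm of $\mathcal N_1$ on $[t',t]$; the hypothesis $\sigma\ge 3$ makes this tail tend to $0$, so $e^{-it\partial_x^2}v(t)$ converges in $H^1$ to some $\varphi^{+}$, and likewise $e^{-it\partial_x^2}w(t)\to\psi^{+}$, with $\|G_1(u)-e^{it\partial_x^2}\varphi^+\|_{H^1}=\|e^{-it\partial_x^2}v(t)-\varphi^+\|_{H^1}\to0$ and similarly for $\psi^+$; the same argument as $t\to-\infty$ gives $\varphi^-,\psi^-$. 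Uniqueness is immediate from unitarity: if $\varphi^+,\tilde\varphi^+$ were two limits, then $\|\varphi^+-\tilde\varphi^+\|_{H^1}=\|e^{it\partial_x^2}(\varphi^+-\tilde\varphi^+)\|_{H^1}\le\|G_1(u)-e^{it\partial_x^2}\varphi^+\|_{H^1}+\|G_1(u)-e^{it\partial_x^2}\tilde\varphi^+\|_{H^1}\to0$, forcing $\varphi^+=\tilde\varphi^+$.

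The main obstacle is the $H^1$-control of the nonlocal contributions coming from $\phi_t$ and of the coupling between $v$ and $w$: unlike a pure power nonlinearity one must estimate terms of the form $v\int_{-\infty}^x(\cdots)$ and, upon differentiating $\mathcal N_1,\mathcal N_2$, terms in which the derivative falls on $w$ (morally a second derivative of $u$). Placing these in the correct dual Strichartz space requires enough decaying factors $\|v\|_{L^4_t W^{1,\infty}_x}$ to absorb that one lost derivative while keeping the time tail integrable, and it is exactly this accounting — rather than the naive short-range threshold $\sigma>1$ — that forces $\sigma\ge 3$. Once these terms are handled, the rest of the argument is routine.
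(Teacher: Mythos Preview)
Your approach is essentially the paper's: pass to the Gauge variables $(G_1(u),G_2(u))$, obtain the derivative-free Schr\"odinger system with nonlinearity of degree $\geq 2\sigma+1$, and close a small-data Strichartz contraction/bootstrap in $S^1\times S^1$ (your space $X$ is a slight restriction but contains the norms actually used), then scatter by the standard Cauchy-in-time argument. The only refinement worth noting is that the paper pins down the role of $\sigma\ge 3$ more precisely than your ``lost derivative'' heuristic: it is the embedding $S^1\hookrightarrow L^{2\sigma}_{t,x}$, used to place the nonlocal piece $|\eta|\int_{-\infty}^x|\eta|^{2\sigma}$ in $L^1_tL^2_x$, that fails for $\sigma<3$.
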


\begin{remark}
The assumption $\sigma\geq 3$ in the above theorem is a technical condition, which is useful in the proof of this theorem. Our result in Theorem \ref{thm2} is not the same in \cite{BaWuXu20} where the authors showed that the solutions of \eqref{eq1} scatters under the smallness of $u_0$ in $H^s$ for $\frac{1}{2}\leq s\leq 1$. However, from \cite{BaWuXu20}, we conjecture that the above theorem holds true when $\sigma\geq 2$.   
\end{remark}

\begin{remark}
It is well known that if initial data $u_0$ is in $H^2$ then the associated solution $u$ of \eqref{eq1} is defined uniquely (see for instance, \cite[Theorem 1.1]{HaOz16}). From theorem \ref{thm2}, the solution is global in time. We refer the reader to \cite{Wu15,FuHaIn17,PeSh18,JeLiPeSu18,BaPe22} and conferences therein for the other results on global existence of solutions of \eqref{eq1}.  
\end{remark}

\begin{remark}
Let $u\in H^{s+1}$ for some $s>\frac{1}{2}$. Then $(\varphi,\psi)=(G_1(u),G_2(u)) \in H^{s+1}\times H^s$. Moreover, $\norm{\varphi}_{H^{s+1}}+\norm{\psi}_{H^s}\lesssim C(\norm{u}_{H^s})$, where $C: \R^+ \rightarrow \R^+$ is a continuous, locally bounded function. Thus, we can also work in $H^{s+1}$, $s>\frac{1}{2}$ and obtain a similar result of Theorem \ref{thm2}. However, to avoid the complexities from fractional derivatives, we only work in $H^2$ as in Theorem \ref{thm2}.
\end{remark}

Our strategy to prove Theorem \ref{thm1} is to use the Gauge transformations of $u$ to obtain the system \eqref{system new} of two Schr\"odinger equations without derivative nonlinearities. In \cite[Lemma 3.8]{Tinpaper4}, we studied this system to prove existence of multi-solitons, where the profile $H$ decays exponentially in time. Our method used in \cite{Tinpaper4} is inspired by \cite{CoDoTs15,CoTs14}, where the authors proved existence of multi-solitons for classical nonlinear Schrödinger equations. In our case, the profile $H$ only decays polynomially in time. Combining to the fact that the profile $W$ decays polynomially in time, it suffices to show that there exists a unique solution of \eqref{system new} which decays polynomially in time and hence we obtain a desired solution of \eqref{eq1} scattering to the given asymptotic profile by polynomial rate. One of difficulty in the proof is to prove a relation of $\varphi,\psi$, the Gauge transformation of $u$. More precisely, we need to prove $\psi=\partial_x\varphi-\frac{i}{2}|\varphi|^{2\sigma}\varphi$. Define $\kappa=\partial_x\varphi-\frac{i}{2}|\varphi|^{2\sigma}\varphi$, hence, it suffices to show that $\psi=\kappa$. Instead of considering the quantity $\norm{\psi(t)-\kappa(t)}_{L^2}^2$ as in \cite{HaOz94}, we consider the quantity $\norm{\tilde{\psi}(t)-\tilde{\kappa}(t)}_{L^2}^2$ since we already have the boundedness of $\tilde{\psi}$ and $\tilde{\kappa}$ in large time by Proposition \ref{pro exist solution}.\\

Theorem \ref{thm2} is proved by investigating the system \eqref{rewrite system}. We show that under a sufficiently small condition of the initial data, the associated solution of \eqref{rewrite system} scatters in both time directions and this implies the desired result.\\

This paper is organized as follows. In Section \ref{sec:Pre and notation}, we give some useful notation and preliminaries for the proof of our main results. Section \ref{sec:proof of the main results} is devoted to prove the main results of this paper Theorem \ref{thm1} and Theorem \ref{thm2}.

\section*{Acknowledgement} The author was supported by Post-doc fellowship of Labex MME-DII: SAIC/2022 No 10078. The author would like to thank an unknown reviewer for some useful discussions to improve this paper.

\section{Preliminaries and notation}
\label{sec:Pre and notation}

For convenience, we define $L=i\partial_t+\partial_x^2$ the Schr\"odinger operator.

For a function $f$, we denote by $f_x$ or $\partial_x f$ the derivative in space of the function $f$.

We denote by $e^{it\partial_x^2}$ the propagator of the linear Schr\"odinger equation.

Denote $|(a,b)|=|a|+|b|$ and $\norm{(a,b)}_{X}=\norm{a}_{X}+\norm{b}_X$, for any Banach space $X$.

A pair $(q,r)$ is called an admissible in dimension one if $q,r>0$ and $\frac{2}{q}+\frac{1}{r}=\frac{1}{2}$. We define $\mathcal{A}$ to be the set of all admissible pairs.

For a function $f(z)$ defined for a complex variable $z$ and for a positive integer $k$, we define $k$th order complex derivative of $f(z)$ by:
\[
f^{k}(z):=\left(\frac{\partial^k f}{\partial_z^k}, \frac{\partial^k}{\partial_z^{k-1}\partial_{\overline{z}}}, \cdots, \frac{\partial^k f}{\partial_z\partial_{\overline{z}}^{k-1}},\frac{\partial^k f}{\partial_{\overline{z}}^k}\right),
\]
where 
\[
\frac{\partial f}{\partial_z}:=\frac{1}{2}\left(\frac{\partial f}{\partial x}-i\frac{\partial f}{\partial y}\right), \quad \frac{\partial f}{\partial_{\overline{z}}}:=\frac{1}{2}\left(\frac{\partial f}{\partial x}+i\frac{\partial f}{\partial y}\right).
\]
We refer the reader to \cite{AnKi21} for related notations.

Let $I$ be an interval of $\R$. The Strichartz space $S(I)$ is defined by
\[
\norm{u}_{S(I)}=\sup_{(q,r)\in\mathcal{A}}\norm{u}_{L^qL^r}.
\]
Let $N(I)$ be the dual space of $S(I)$. Define $S^1,N^1$ by
\begin{align*}
\norm{u}_{S^1(I)} &=\norm{(u,\partial_x u)}_{S(I)}=\norm{\left<\nabla\right>u}_{S(I)},\\
\norm{u}_{N^1(I)} &=\norm{(u,\partial_x u)}_{N(I)}=\norm{\left<\nabla\right>u}_{N(I)}.
\end{align*}

Let $\varphi=G_1(u)$ and $\psi=G_2(u)$. Then, $\psi=\partial_x\varphi-\frac{i}{2}|\varphi|^{2\sigma}\varphi$.  As in \cite{Tinpaper4}, we see that if $u$ shows \eqref{eq1} then $(\varphi,\psi)$ show the following system:
\begin{equation}\label{eqsystem}
\begin{cases}
L\varphi&=P(\varphi,\psi):=i\sigma |\varphi|^{2(\sigma-1)}\varphi^2\overline{\psi}-\sigma(\sigma-1)\varphi\int_{-\infty}^x|\varphi|^{2(\sigma-2)}\Im(\psi^2\overline{\varphi}^2),\\
L\psi&=Q(\varphi,\psi):=-i\sigma |\varphi|^{2(\sigma-1)}\psi^2\overline{\varphi}-\sigma(\sigma-1)\psi\int_{-\infty}^x|\varphi|^{2(\sigma-2)}\Im(\psi^2\overline{\varphi}^2).
\end{cases}
\end{equation}
Let $\eta=(\varphi,\psi)$ and $F=(P,Q)$. We may rewrite \eqref{eqsystem} as follows:
\begin{equation}
\label{rewrite system}
L\eta=F(\eta).
\end{equation}

We next introduce useful tools for the proof of the main results in the next section.

\begin{lemma}\label{lm1}
For $\eta_1=(\varphi_1,\psi_1),\eta_2=(\varphi_2,\psi_2) \in H^1\times H^1$, we have
\begin{align*}
|F(\eta_1)-F(\eta_2)|&\lesssim |\eta_1-\eta_2|(|\eta_1|^{2\sigma}+|\eta_2|^{2\sigma}+\int_{-\infty}^x|\eta_1|^{2\sigma}\,dy)\\
&+|\eta_2|\int_{-\infty}^x|\eta_1-\eta_2|(|\eta_1|^{2\sigma-1}+|\eta_2|^{2\sigma-1})\,dy,\\
|\partial_x(F(\eta_1)-F(\eta_2))|&\lesssim |\partial_x(\eta_1-\eta_2)|\left(|\eta_1|^{2\sigma}+|\eta_2|^{2\sigma}+\int_{-\infty}^x|\eta_1|^{2\sigma}\right)\\
&\quad +|\eta_1-\eta_2||\partial_x(\eta_1,\eta_2)|(|\eta_1|^{2\sigma-1}+|\eta_2|^{2\sigma-1})\\
&+|\partial_x\eta_2|\int_{-\infty}^x|\eta_1-\eta_2|(|\eta_1|^{2\sigma-1}+|\eta_2|^{2\sigma-1})+|\eta_1-\eta_2|(|\eta_1|^{2\sigma}+|\eta_2|^{2\sigma}).
\end{align*}
\end{lemma}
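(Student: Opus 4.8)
The plan is to separate each component of $F=(P,Q)$ into a \emph{local} part and a \emph{nonlocal} part and treat the two by the same mechanism. Writing $\eta=(\varphi,\psi)$, the local parts are pointwise products such as $g(\varphi)\overline\psi$ with $g(z)=|z|^{2(\sigma-1)}z^2$ (and its analogues), each positively homogeneous in $\eta$ of degree $2\sigma+1$; the nonlocal parts have the form $\varphi\int_{-\infty}^x h$ (resp. $\psi\int_{-\infty}^x h$) with integrand $h=|\varphi|^{2(\sigma-2)}\Im(\psi^2\overline\varphi^2)$, a positively homogeneous function of $\eta$ of degree $2\sigma$ satisfying $|h|\lesssim|\eta|^{2\sigma}$. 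The one elementary fact I would use repeatedly is that for a function $G$ which is positively homogeneous of degree $d\geq 1$ and $C^1$ away from the origin one has $|G(z_1)-G(z_2)|\lesssim(|z_1|^{d-1}+|z_2|^{d-1})|z_1-z_2|$, obtained from $|\nabla G(z)|\lesssim|z|^{d-1}$ by integrating along the segment $[z_2,z_1]$. Applied with $d=2\sigma$ this controls the local factors and $h$, and applied with $d=2\sigma-1$ it will control the derivative $g'$ arising below.

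For the first inequality I would expand each difference through $AB-A'B'=(A-A')B+A'(B-B')$, placing the difference on the cheapest factor and freezing the other. For the local terms this gives, after $|\varphi_i|,|\psi_i|\leq|\eta_i|$ and $|\varphi_1-\varphi_2|,|\psi_1-\psi_2|\leq|\eta_1-\eta_2|$, a bound of the form $(|\eta_1|^{2\sigma-1}+|\eta_2|^{2\sigma-1})|\eta_1|\,|\eta_1-\eta_2|+|\eta_2|^{2\sigma}|\eta_1-\eta_2|$, which collapses to $|\eta_1-\eta_2|(|\eta_1|^{2\sigma}+|\eta_2|^{2\sigma})$ after absorbing the mixed power $|\eta_2|^{2\sigma-1}|\eta_1|$ by Young's inequality. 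For the nonlocal terms I would split $\varphi_1\int h(\eta_1)-\varphi_2\int h(\eta_2)=(\varphi_1-\varphi_2)\int h(\eta_1)+\varphi_2\int(h(\eta_1)-h(\eta_2))$ and use $|h(\eta_1)|\lesssim|\eta_1|^{2\sigma}$ on the first piece and the homogeneous difference bound on $h$ in the second; this produces exactly the two integral terms in the statement. The $Q$-component is identical after exchanging $\varphi$ and $\psi$.

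For the second inequality I would differentiate first. By the chain and product rules, and using $\partial_x\int_{-\infty}^x h(\eta)\,dy=h(\eta)$, each term of $\partial_x F$ is either (i) linear in $\partial_x\eta$ with a coefficient that is a homogeneous expression of degree $2\sigma$ in $\eta$ — possibly nonlocal, namely $\int_{-\infty}^x h(\eta)$ from the differentiated nonlocal term — or (ii) the purely local term $\varphi\,h(\eta)$ of degree $2\sigma+1$ produced by the fundamental theorem of calculus. Writing the type-(i) terms as $\Phi(\eta,\partial_x\eta)$ and inserting $\pm\Phi(\eta_1,\partial_x\eta_2)$, the difference $\Phi(\eta_1,\partial_x\eta_1)-\Phi(\eta_1,\partial_x\eta_2)=\Phi(\eta_1,\partial_x(\eta_1-\eta_2))$ is handled by linearity and yields $|\partial_x(\eta_1-\eta_2)|\big(|\eta_1|^{2\sigma}+\int_{-\infty}^x|\eta_1|^{2\sigma}\big)$, while $\Phi(\eta_1,\partial_x\eta_2)-\Phi(\eta_2,\partial_x\eta_2)$ records only the change of the degree-$2\sigma$ coefficients and, through the homogeneous difference bound and Young's inequality, produces $|\eta_1-\eta_2|\,|\partial_x(\eta_1,\eta_2)|(|\eta_1|^{2\sigma-1}+|\eta_2|^{2\sigma-1})$ together with $|\partial_x\eta_2|\int_{-\infty}^x|\eta_1-\eta_2|(|\eta_1|^{2\sigma-1}+|\eta_2|^{2\sigma-1})$. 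The type-(ii) term is differenced exactly as in the first part and contributes the remaining $|\eta_1-\eta_2|(|\eta_1|^{2\sigma}+|\eta_2|^{2\sigma})$.

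The step I expect to be the main obstacle is the coefficient difference inside the local part of the second estimate: after the chain rule one coefficient is $g'$, the derivative of the degree-$2\sigma$ map $g$, hence homogeneous of degree $2\sigma-1$, and bounding $|g'(\varphi_1)\overline\psi_1-g'(\varphi_2)\overline\psi_2|$ forces the difference bound to be applied to $g'$ itself, i.e. at the next homogeneity level $d=2\sigma-1$ (so $d-1=2\sigma-2$). This is where the implicit regularity requirement enters: all the exponents $2\sigma$, $2\sigma-1$, $2\sigma-2$ must be $\geq 1$ for both the homogeneous mean-value bound and the differentiability of the factors, which holds in the range $\sigma>2$ in which the lemma is used. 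The rest is careful bookkeeping of the many product-rule terms and repeated use of Young's inequality to reorganize mixed powers $|\eta_1|^a|\eta_2|^b$ into $|\eta_1|^{a+b}+|\eta_2|^{a+b}$.
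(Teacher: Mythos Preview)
Your proposal is correct and follows essentially the same approach as the paper: both rest on the mean-value bound $|G(z_1)-G(z_2)|\lesssim(|z_1|^{d-1}+|z_2|^{d-1})|z_1-z_2|$ applied to the homogeneous building blocks $|z|^{2(\sigma-1)}$, $|z|^{2(\sigma-2)}\overline{z}^2$, $|z|^{2(\sigma-1)}z^2$ (and, for the derivative estimate, to $g'$ one homogeneity level down), assembled via the product-rule splitting $AB-A'B'=(A-A')B+A'(B-B')$. Your local/nonlocal and type-(i)/(ii) bookkeeping is a bit more systematic than the paper's presentation, but the mechanism and the implicit regularity constraint $\sigma>2$ are identical.
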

To prove Lemma \ref{lm1}, we only need to prove the following estimates. 
\begin{lemma}
Let $\sigma>2$. We have the following estimates:
\begin{align*}
\left||\varphi_1|^{2(\sigma-1)}\varphi_1^2\overline{\psi}_1-|\varphi_2|^{2(\sigma-1)}\varphi_2^2\overline{\psi}_2\right|&\lesssim |\eta_1-\eta_2||(\eta_1,\eta_2)|^{2\sigma},\\
\left||\varphi_1|^{2(\sigma-2)}\Im(\psi_1^2\overline{\varphi}_1^2)-|\varphi_2|^{2(\sigma-2)}\Im(\psi_2^2\overline{\varphi}_2^2)\right|&\lesssim |\eta_1-\eta_2||(\eta_1,\eta_2)|^{2\sigma-1},\\
\left|\partial_x(|\varphi_1|^{2(\sigma-1)}\varphi_1^2\overline{\psi}_1-|\varphi_2|^{2(\sigma-1)}\varphi_2^2\overline{\psi}_2)\right|&\lesssim |\partial_x(\eta_1-\eta_2)||(\eta_1,\eta_2)|^{2\sigma},
\end{align*}
where $\eta_1=(\varphi_1,\psi_1),\eta_2=(\varphi_2,\psi_2)$.
\end{lemma}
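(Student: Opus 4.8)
The plan is to derive all three pointwise inequalities from a single elementary fact about homogeneous maps, used together with the standard ``difference of products'' telescoping identity. Identify $\C$ with $\R^2$ and call a map $\Psi\colon\C\to\C$ \emph{admissible of degree $\alpha$} if it is positively homogeneous of degree $\alpha\geq 1$, smooth on $\C\setminus\{0\}$, and satisfies $|D\Psi(z)|\lesssim|z|^{\alpha-1}$. Since $\sigma>2$, every modulus-monomial that occurs below is admissible (its degree exceeds $1$), so its gradient extends continuously by $0$ at the origin. For any admissible $\Psi$, integrating along the segment $[z_2,z_1]$ yields the \emph{workhorse estimate}
\[
|\Psi(z_1)-\Psi(z_2)|\leq|z_1-z_2|\int_0^1|D\Psi(z_2+t(z_1-z_2))|\,dt\lesssim|z_1-z_2|\left(|z_1|^{\alpha-1}+|z_2|^{\alpha-1}\right).
\]
This is the only analytic input; everything else is bookkeeping of homogeneity degrees.

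For the first inequality, set $A(\varphi)=|\varphi|^{2(\sigma-1)}\varphi^2$, admissible of degree $2\sigma$, and telescope
\[
A(\varphi_1)\overline{\psi}_1-A(\varphi_2)\overline{\psi}_2=A(\varphi_1)(\overline{\psi}_1-\overline{\psi}_2)+(A(\varphi_1)-A(\varphi_2))\overline{\psi}_2.
\]
The first summand is $\lesssim|\varphi_1|^{2\sigma}|\psi_1-\psi_2|$, and the workhorse applied to $A$ bounds the second by $|\varphi_1-\varphi_2|(|\varphi_1|^{2\sigma-1}+|\varphi_2|^{2\sigma-1})|\psi_2|$; both are $\lesssim|\eta_1-\eta_2||(\eta_1,\eta_2)|^{2\sigma}$. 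For the second inequality, write the integrand as $\Im(\psi^2K(\varphi))$ with $K(\varphi)=|\varphi|^{2(\sigma-2)}\overline{\varphi}^2$, admissible of degree $2\sigma-2>1$, use $|\Im(\cdot)|\leq|\cdot|$, and telescope $\psi_1^2K(\varphi_1)-\psi_2^2K(\varphi_2)=(\psi_1^2-\psi_2^2)K(\varphi_1)+\psi_2^2(K(\varphi_1)-K(\varphi_2))$. Bounding $|\psi_1^2-\psi_2^2|\lesssim|\psi_1-\psi_2|(|\psi_1|+|\psi_2|)$ for the first summand and applying the workhorse to $K$ for the second gives the claimed $\lesssim|\eta_1-\eta_2||(\eta_1,\eta_2)|^{2\sigma-1}$, the degree of each term being exactly $2\sigma$.

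For the third inequality I again aim directly at the displayed right-hand side $|\partial_x(\eta_1-\eta_2)||(\eta_1,\eta_2)|^{2\sigma}$. Differentiating $G(\varphi,\psi)=A(\varphi)\overline{\psi}$ via the Wirtinger chain rule yields $\partial_xG=\sum_{\ast}\Phi_{\ast}(\varphi,\psi)\,\partial_x\ast$, the sum running over $\ast\in\{\varphi,\overline{\varphi},\psi,\overline{\psi}\}$, where each coefficient $\Phi_{\ast}$ is admissible of degree $2\sigma$. Writing this schematically as $\Phi(\eta)\cdot\partial_x\eta$ and telescoping,
\[
\partial_xG(\eta_1)-\partial_xG(\eta_2)=\Phi(\eta_1)\cdot\partial_x(\eta_1-\eta_2)+(\Phi(\eta_1)-\Phi(\eta_2))\cdot\partial_x\eta_2.
\]
The first term is $\lesssim|(\eta_1,\eta_2)|^{2\sigma}|\partial_x(\eta_1-\eta_2)|$, which is precisely the target.

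The main obstacle is the second term. The workhorse gives $|\Phi(\eta_1)-\Phi(\eta_2)|\lesssim|\eta_1-\eta_2|(|\eta_1|^{2\sigma-1}+|\eta_2|^{2\sigma-1})$, so this contribution is controlled only by $|\eta_1-\eta_2||\partial_x\eta_2|(|\eta_1|^{2\sigma-1}+|\eta_2|^{2\sigma-1})$, which carries the two factors $|\eta_1-\eta_2|$ and $|\partial_x\eta_2|$ rather than the single factor $|\partial_x(\eta_1-\eta_2)|$ demanded by the statement. I expect this to be the crux: the Leibniz rule unavoidably puts a derivative on the reference profile $\eta_2$, and no pointwise cancellation is available — at a point where $\partial_x\eta_1=\partial_x\eta_2$ but $\eta_1\neq\eta_2$ the left-hand side is generically nonzero while $|\partial_x(\eta_1-\eta_2)|$ vanishes. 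This cross term is exactly the summand $|\eta_1-\eta_2||\partial_x(\eta_1,\eta_2)|(|\eta_1|^{2\sigma-1}+|\eta_2|^{2\sigma-1})$ appearing in Lemma \ref{lm1}; the third estimate is therefore best understood as furnishing, and its proof must retain, those additional terms alongside the clean one displayed here.
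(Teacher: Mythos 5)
Your proposal follows essentially the same route as the paper: your ``workhorse'' segment-integration bound is exactly the paper's identity \eqref{eq:expression} for $C^1$ functions of a complex variable, combined with the homogeneity bound $|f^{(1)}(z)|\lesssim |z|^{\alpha-1}$ (available because $\sigma>2$), and your telescoping for the first two estimates matches the paper's applications to $f(z)=|z|^{2(\sigma-1)}$ and $f(z)=|z|^{2(\sigma-2)}\overline{z}^2$. Your diagnosis of the third estimate is also correct, and it agrees with what the paper actually proves rather than with what it displays: differentiating \eqref{eq:expression} gives \eqref{eq:derivative expression}, whose second term $|u-v|\sup_{\theta\in[0,1]}|\partial_x f^{(1)}(v+\theta(u-v))|$ becomes, after the bounds $|\partial_x\partial_z f(u)|,\,|\partial_x\partial_{\overline z}f(u)|\lesssim |\partial_x u||u|^{2(\sigma-1)}$ for $f(z)=|z|^{2(\sigma-1)}z^2$, precisely your cross term $|\eta_1-\eta_2||\partial_x(\eta_1,\eta_2)||(\eta_1,\eta_2)|^{2\sigma-1}$. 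That term is retained in the statement of Lemma \ref{lm1}, which is what is used downstream in Proposition \ref{pro exist solution} (it is the source of the summand estimated there by $\norm{\tilde{\eta}}_{L^{\frac{4}{3}}L^2}\norm{\partial_x\tilde{\eta}}_{L^4L^{\infty}}$ and by $\norm{\tilde{\eta}}_{L^1L^2}\norm{\partial_x W}_{L^{\infty}L^{\infty}}$). The displayed third inequality of the auxiliary lemma, with only $|\partial_x(\eta_1-\eta_2)||(\eta_1,\eta_2)|^{2\sigma}$ on the right, is indeed false as a pointwise statement --- your observation that it fails wherever $\partial_x\eta_1=\partial_x\eta_2$ but $\eta_1\neq\eta_2$ amounts to a counterexample, e.g.\ $\eta_1-\eta_2$ a nonzero constant --- so it should be read as an imprecise shorthand for the two-term bound; your write-up supplies the corrected statement, which is exactly what the proof of Lemma \ref{lm1} needs.
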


\begin{proof}
For each function $f\in C^1$, we have the following expression (see for instance \cite{Vi07}):
\begin{equation}
\label{eq:expression}
f(u)-f(v)=(u-v)\int_0^1 \partial_z f(v+\theta (u-v))d\theta + \overline{u-v}\int_0^1 \partial_{\overline{z}}f(v+\theta (u-v))d\theta.
\end{equation}
Using the above expression for $f(z)=|z|^{2(\sigma-1)}$ with noting that $f\in C^1$ (since $\sigma>2$) and $|f^{(1)}(z)|\lesssim |z|^{2(\sigma-1)-1}$, we have
\[
\left||\varphi_1|^{2(\sigma-1)}-|\varphi_2|^{2(\sigma-1)}\right|=|f(\varphi_1)-f(\varphi_2)|\lesssim |\varphi_1-\varphi_2||(\varphi_1,\varphi_2)|^{2(\sigma-1)-1}.
\] 
This implies easily the first estimate. To prove the second estimate, we using the expression \eqref{eq:expression} for $f(z)=|z|^{2(\sigma-2)}\overline{z}^2$. As $f\in C^1$ (since $\sigma>2$) and $|f^{(1)}(z)|\lesssim |z|^{2(\sigma-1)-1}$, we have 
\[
\left||\varphi_1|^{2(\sigma-2)}\overline{\varphi_1}^2-|\varphi_2|^{2(\sigma-2)}\overline{\varphi_2}^2\right|\lesssim |\varphi_1-\varphi_2||(\varphi_1,\varphi_2)|^{2(\sigma-1)-1}.
\]
This implies easily the second estimate. Consider the third estimate. The most difficult contribution in the third estimate is the following 
\[
\left|\partial_x(|\varphi_1|^{2(\sigma-1)}\varphi_1^2-|\varphi_2|^{2(\sigma-1)}\varphi_2^2)\right|.
\]
From \eqref{eq:expression}, we have
\begin{equation}
\label{eq:derivative expression}
|\partial_x(f(u)-f(v))|\lesssim |\partial_x(u-v)|\sup_{\theta\in [0,1]}|f^{(1)}(v+\theta (u-v))|+ |u-v| \sup_{\theta\in [0,1]} |\partial_x f^{(1)}(v+\theta (u-v))|.
\end{equation}
We use the above expression for $f(z)=|z|^{2(\sigma-1)}z^2$. The first term is easy to handle. Consider the second term. We have
\[
\partial_z f (z)= (\sigma+1)|z|^{2(\sigma-1)}z,\quad \partial_{\overline{z}}f(z)=(\sigma-1)|z|^{2(\sigma-2)}z^3.
\]
Thus, 
\[
|\partial_x \partial_zf(u)|\lesssim |\partial_xu||u|^{2(\sigma-1)}, \quad |\partial_x \partial_{\overline{z}}f(u)|\lesssim |\partial_x u||u|^{2(\sigma-1)}.
\]
From this, we easily estimate the second term in \eqref{eq:derivative expression} and the third estimate is proved.
\end{proof}

\section{Proof of the main results}
\label{sec:proof of the main results}

\subsection{Proof of Theorem \ref{thm1}}
\label{sec:Proof 1}

The aim of this section is to prove Theorem \ref{thm1}. We use the similar argument as in \cite{Tinpaper4}.\\
Let $R=e^{it\partial_x^2}u^+$. Then
\[
LR+i|R|^{2\sigma}R_x=i|R|^{2\sigma}R_x:=v(t,x).
\]
The Gauge transformations of $R$ are defined by:
\begin{align*}
h&=G_1(R),\\
k&=G_2(R)=\partial_x h-\frac{i}{2}|h|^{2\sigma}h.
\end{align*}
We prove that
\begin{align}
Lh&=P(h,k)+m\label{eqof h},\\
Lk&=Q(h,k)+n\label{eqof k},
\end{align}
where $m,n$ are functions depending on $R,v$:
\begin{align*}
m(t,x)&=\exp\left(\frac{i}{2}\int_{-\infty}^x |R(t,y)|^2dy\right)v(t,x),\\
n&=\partial_xm-\frac{i}{2}(\sigma+1)|h|^{2\sigma}m-\frac{i}{2}\sigma |h|^{2(\sigma-1)}h^2\overline{m}.
\end{align*}
Indeed, since $h(t,x)=\exp\left(\frac{i}{2}\int_{-\infty}^x |R(t,y)|^{2\sigma}dy\right)R(t,x)$, we have
\begin{align*}
\partial_t h&=\exp\left(\frac{i}{2}\int_{-\infty}^x |R(t,y)|^{2\sigma}dy\right)\left(\partial_t R+\frac{i}{2}R\partial_t\int_{-\infty}^x|R(t,y)|^{2\sigma}dy\right),\\
\partial_{xx} h&=\exp\left(\frac{i}{2}\int_{-\infty}^x |R(t,y)|^{2\sigma}dy\right)\left(R_{xx}+i|R|^{2\sigma}R_x+\frac{i}{2}\partial_x(|R|^{2\sigma})R-\frac{1}{4}|R|^{4\sigma}R\right).
\end{align*}   
Moreover, as in \cite{HaOz16}[Section 4], we have
\begin{align*}
\frac{1}{2}\partial_t \int_{-\infty}^x |R|^{2\sigma}dy&=-\sigma |h|^{2(\sigma-1)}\Im(\overline{h}k)+\sigma \int_{-\infty}^x\partial_x(|h|^{2(\sigma-1)})\Im(\overline{h}k)dy-\frac{1}{4}|h|^{4\sigma}.
\end{align*}
Thus,
\begin{align*}
Lh&=\exp\left(\frac{i}{2}\int_{-\infty}^x |R(t,y)|^{2\sigma}dy\right)\left(LR+i|R|^{2\sigma}R_x-\frac{1}{2}R\partial_t\int_{-\infty}^x|R|^{2\sigma}dy+\frac{i}{2}R\partial_x(|R|^{2\sigma})-\frac{1}{4}|R|^{4\sigma}R\right)\\
&=\exp\left(\frac{i}{2}\int_{-\infty}^x |R(t,y)|^{2\sigma}dy\right) v+ h\left(-\frac{1}{2}\partial_t\int_{-\infty}^x|R|^{2\sigma}dy +\frac{i}{2}\partial_x(|h|^{2\sigma})-\frac{1}{4}|h|^{4\sigma}\right)\\
&=\exp\left(\frac{i}{2}\int_{-\infty}^x |R(t,y)|^{2\sigma}dy\right) v+h\left(\sigma |h|^{2(\sigma-1)}\Im(\overline{h}k)-\sigma \int_{-\infty}^x \partial_x(|h|^{2(\sigma-1)})\Im(\overline{h}k)dy+\frac{i}{2}\partial_x(|h|^{2\sigma})\right)\\
&=\exp\left(\frac{i}{2}\int_{-\infty}^x |R(t,y)|^{2\sigma}dy\right) v+h\left(i\sigma |h|^{2(\sigma-1)}h\overline{k}-\sigma(\sigma-1)\int_{-\infty}^x|h|^{2(\sigma-2)}\Im(\overline{h}^2 k^2)dy\right)\\
&=\exp\left(\frac{i}{2}\int_{-\infty}^x |R(t,y)|^{2\sigma}dy\right) v+P(h,k)\\
&=P(h,k)+m,
\end{align*}
which gives \eqref{eqof h}. Furthermore, using $k=\partial_xh-\frac{i}{2}|h|^{2\sigma}h$, we have 
\begin{align*}
Lk&=\partial_x Lh-\frac{i}{2}L(|h|^{2\sigma}h)\\
&=\partial_x Lh- \frac{i}{2}\left(i\partial_t (h^{\sigma+1}\overline{h}^{\sigma}) +\partial_{xx}(h^{\sigma+1}\overline{h}^{\sigma})\right)\\
&=\partial_x Lh\\
&\quad -\frac{i}{2}\left((\sigma+1)|h|^{2\sigma}i\partial_t h+\sigma |h|^{2(\sigma-1)} h^2 i\partial_t\overline{h} + (\sigma+1)|h|^{2\sigma}\partial_{xx}h +\sigma |h|^{2(\sigma-1)}h^2\partial_{xx}\overline{h}\right.\\
&\quad \quad \left. +(\sigma+1)\partial_xh \partial_x(|h|^{2\sigma})+\sigma \partial_x\overline{h}\partial_x(|h|^{2(\sigma-1)}h^2) \right)\\ 
&=\partial_xLh -\frac{i}{2}\left((\sigma+1)|h|^{2\sigma}Lh + \sigma |h|^{2(\sigma-1)}h^2 (2\partial_{xx} \overline{h}-\overline{Lh})\right)\\
&\quad -\frac{i}{2}(\sigma+1)\partial_xh\partial_x(|h|^{2\sigma})-\frac{i}{2}\sigma \partial_x\overline{h}\partial_x(|h|^{2(\sigma-1)}h^2).
\end{align*}
In the above expression, the contribution of $\partial_{xx}\overline{h}$ can be removed from the contribution of $\partial_x P(h,k)$ in $\partial_xLh$. The contribution of $m$ in the above expression of $Lk$ appears only in the terms $\partial_x Lh$, $Lh$ and $\overline{Lh}$. It is easy to check that this equals to 
$$\partial_x m -\frac{i}{2}(\sigma+1)|h|^{2\sigma}m-\frac{i}{2}\sigma |h|^{2(\sigma-1)}h^2\overline{m}.$$
Since the remainder depends only on $h,k$ and does not depend on $m$, which is actually a polynomial of $h,k,\overline{h},\overline{k}$, by using the similar argument to obtain the system \ref{eqsystem}, we have
\[
Lk=\partial_x m -\frac{i}{2}(\sigma+1)|h|^{2\sigma}m-\frac{i}{2}\sigma |h|^{2(\sigma-1)}h^2\overline{m}+ Q(h,k)=Q(h,k)+n,
\]
which gives \eqref{eqof k}. By H\"older's inequality and dispersive estimates, we have, for $t\geq 1$, 
$$\norm{(m,n)}_{H^1}\lesssim \norm{v}_{H^2}\lesssim t^{-\sigma}\norm{u^+}_{W^{1,1} \cap H^3}^{2\sigma+1}.$$

Let $\hat{W}=(h,k)$, $\hat{H}=-(m,n)$ and $\tilde{\eta}=\eta-\hat{W}=(\tilde{\varphi},\tilde{\psi})$. Then, 
\begin{equation}
\label{relation tilde eta}
\partial_x\tilde{\psi}=\partial_x\tilde{\varphi}-\frac{i}{2}(|\tilde{\varphi}+h|^{2\sigma}(\tilde{\varphi}+h)-|h|^{2\sigma}h).
\end{equation}
We see that if $u$ shows \eqref{eq1} then $\tilde{\eta}$ shows the following system
\begin{equation}
\label{system new}
L\tilde{\eta}=F(\tilde{\eta}+\hat{W})-F(\hat{W})+\hat{H}.
\end{equation}
We divide the proof of Theorem \ref{thm1} into three steps. \\
\emph{Step 1. Existence of a solution of the system}

In this step, we show that \eqref{system new} admits a solution which decays polynomially in time. To do so, we need the following result.
\begin{proposition}\label{pro exist solution}
Let $F=(P,Q)$, where $P,Q$ are defined by \eqref{eqsystem}. Let $H=H(t,x):[0,\infty)\times\R\rightarrow\C^2$, $W=W(t,x):[0,\infty)\times\R\rightarrow\C^2$ be given functions which satisfy for some $C_1,C_2>0$, $T_0>0$:
\begin{align}
t^{\frac{1}{2}}\norm{W(t)}_{L^{\infty}\times L^{\infty}}+\norm{W(t)}_{L^2 \times L^2}+t^{\sigma}\norm{H(t)}_{L^2 \times L^2}&\leq C_1,\quad \forall t\geq T_0,\label{eq system 1}\\
t^{\frac{1}{2}}\norm{\partial_xW(t)}_{L^{\infty}\times L^{\infty}}+\norm{\partial_x W(t)}_{L^{2}\times L^2}+t^{\sigma}\norm{\partial_xH(t)}_{L^2 \times L^2}&\leq C_2,\quad \forall t\geq T_0.\label{eq system 2}
\end{align}
Then the system
\begin{equation}
\label{eq:system prop 3.1}
L\tilde{\eta}=F(\tilde{\eta}+W)-F(W)+H
\end{equation}
admits a unique solution $\tilde{\eta}$ that satisfies
\begin{equation}
\label{eq:estimate of tilde_eta}
t^{\sigma-1}(\norm{\tilde{\eta}}_{S[t,\infty)\times S[t,\infty)}+\norm{\partial_x\tilde{\eta}}_{S[t,\infty)\times S[t,\infty)})\lesssim 1,
\end{equation}
for all $t\geq T_0$.
\end{proposition}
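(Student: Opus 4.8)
The plan is to recast \eqref{eq:system prop 3.1} as a fixed point problem for the Duhamel operator anchored at $t=+\infty$ and to solve it by a contraction argument in a weighted Strichartz space. Since $L=i\partial_t+\partial_x^2$, any solution of \eqref{eq:system prop 3.1} decaying as $t\to\infty$ must satisfy the integral equation
\[
\tilde\eta(t)=i\int_t^\infty e^{i(t-s)\partial_x^2}\bigl(F(\tilde\eta+W)(s)-F(W)(s)+H(s)\bigr)\,ds=:\Phi(\tilde\eta)(t).
\]
I would work in the complete metric space $X$ of functions on $[T_0,\infty)$ with finite norm
\[
\norm{\tilde\eta}_X:=\sup_{t\geq T_0}t^{\sigma-1}\Bigl(\norm{\tilde\eta}_{S[t,\infty)\times S[t,\infty)}+\norm{\partial_x\tilde\eta}_{S[t,\infty)\times S[t,\infty)}\Bigr),
\]
abbreviating the bracketed quantity by $\norm{\cdot}_{S^1[t,\infty)}$ (and the dual spaces $N^1$ componentwise), and show that $\Phi$ is a contraction on a ball $B_\rho=\{\norm{\tilde\eta}_X\leq\rho\}$ once $T_0$ is large. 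By Banach's fixed point theorem this produces a unique fixed point, which is the desired solution, and \eqref{eq:estimate of tilde_eta} is precisely the statement $\norm{\tilde\eta}_X\lesssim1$.

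First I would control the inhomogeneous contribution $\Phi(0)$, for which $F(\tilde\eta+W)-F(W)=0$ and only $H$ survives. By the inhomogeneous Strichartz estimate together with the embedding $L^1_tH^1\hookrightarrow N^1$ and the hypothesis \eqref{eq system 1}--\eqref{eq system 2},
\[
\norm{\Phi(0)}_{S^1[t,\infty)}\lesssim\norm{H}_{N^1[t,\infty)}\lesssim\int_t^\infty\norm{H(s)}_{H^1\times H^1}\,ds\lesssim\int_t^\infty s^{-\sigma}\,ds\lesssim t^{1-\sigma}.
\]
Thus $\norm{\Phi(0)}_X\lesssim1$; this is the origin of the critical rate $t^{1-\sigma}$, since $H$ is the slowest input and the weight $t^{\sigma-1}$ is tuned to it.

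Next I would prove the contraction. For $\tilde\eta_1,\tilde\eta_2\in B_\rho$, the Strichartz estimate gives
\[
\norm{\Phi(\tilde\eta_1)-\Phi(\tilde\eta_2)}_{S^1[t,\infty)}\lesssim\norm{F(\tilde\eta_1+W)-F(\tilde\eta_2+W)}_{N^1[t,\infty)},
\]
and I would bound the right-hand side through $L^1_tH^1$ using Lemma \ref{lm1} with $\eta_1=\tilde\eta_1+W$, $\eta_2=\tilde\eta_2+W$, so that $\eta_1-\eta_2=\tilde\eta_1-\tilde\eta_2$. Every term produced there carries exactly one factor of $\tilde\eta_1-\tilde\eta_2$ (or its derivative) and $2\sigma$ background factors taken from $\tilde\eta_j+W$. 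Reading off $\norm{\tilde\eta_j(s)}_{L^2},\norm{\partial_x\tilde\eta_j(s)}_{L^2}\lesssim\rho\,s^{1-\sigma}$ from the admissible endpoint $L^\infty_tL^2_x\subset S$, together with $\norm{W(s)}_{L^\infty},\norm{\partial_xW(s)}_{L^\infty}\lesssim s^{-1/2}$ and $\norm{W(s)}_{L^2},\norm{\partial_xW(s)}_{L^2}\lesssim1$ from \eqref{eq system 1}--\eqref{eq system 2}, each term integrates in $s$ to something decaying strictly faster than $t^{1-\sigma}$. The slowest contribution is the nonlocal one $\int_{-\infty}^x|\eta_1|^{2\sigma}$, which after $\norm{W}_{L^{2\sigma}}^{2\sigma}\lesssim\norm{W}_{L^\infty}^{2\sigma-2}\norm{W}_{L^2}^2\lesssim s^{-(\sigma-1)}$ has size $\lesssim\norm{\tilde\eta_1-\tilde\eta_2}_X\,s^{2-2\sigma}$, hence after integration $\lesssim\norm{\tilde\eta_1-\tilde\eta_2}_X\,t^{3-2\sigma}$. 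Multiplying by $t^{\sigma-1}$ gives $\lesssim\norm{\tilde\eta_1-\tilde\eta_2}_X\,t^{2-\sigma}$, which is small for $t\geq T_0$ precisely because $\sigma>2$. Choosing $\rho$ of the size of the $O(1)$ bound for $\Phi(0)$ and then $T_0$ large enough makes $\Phi:B_\rho\to B_\rho$ a contraction, so a fixed point exists.

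The main obstacle is the bookkeeping that shows \emph{all} the terms of Lemma \ref{lm1}, including those with $\partial_xW$ and the running integrals $\int_{-\infty}^x(\cdots)$, beat the threshold $t^{1-\sigma}$; it is exactly at the nonlocal term that $\sigma>2$ becomes indispensable, since its gain is only $t^{3-2\sigma}$ and $t^{2-\sigma}\to0$ requires $\sigma>2$. The derivative estimate is handled by the second inequality of Lemma \ref{lm1} together with the $L^2$-control of $\partial_x\tilde\eta_j$ and $\partial_xW$, distributing $\partial_x$ on the high-power background while retaining one difference factor. Finally, uniqueness in the full class satisfying \eqref{eq:estimate of tilde_eta} follows from the contraction: any such solution lies in a common ball $B_\rho$ and satisfies the same Duhamel equation, so the contraction estimate forces two such solutions to coincide on $[T_0,\infty)$.
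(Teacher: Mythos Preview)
Your proposal is correct and follows essentially the same approach as the paper: a Banach fixed point argument for the Duhamel operator $\Phi$ on the weighted Strichartz ball, with the nonlinear differences controlled via Lemma~\ref{lm1} and the smallness obtained by taking $T_0$ large (using $\sigma>2$). The only cosmetic difference is that the paper first verifies $\Phi:B\to B$ directly and then proves the contraction, whereas you deduce the self-mapping from the bound on $\Phi(0)$ together with the Lipschitz estimate; the underlying Strichartz/H\"older bookkeeping is the same.
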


\begin{proof}
Fix $T_0>1$ large enough. Define
\[
B=\left\{\tilde{\eta}\in C([T_0,\infty),H^1\times H^1):t^{\sigma-1}\norm{(\tilde{\eta},\partial_x\tilde{\eta})}_{S[t,\infty)\times S[t,\infty)}\leq C, \quad\forall t\geq T_0\right\},
\]
for some $C>1$ large enough and 
\[
\Phi(\tilde{\eta})(t)=i\int_t^{\infty} e^{i(t-s)\partial_x^2}(F(\tilde{\eta}+W)-F(W)+H)(s)\,ds.
\]
If $\tilde{\eta}$ tends to zero in $H^1$-norm as $t$ tends to infinity then the system \eqref{eq:system prop 3.1} is rewritten as follows
$$\tilde{\eta}=\Phi(\tilde{\eta}).$$
Thus, it suffices to prove that $\Phi$ is a contraction mapping on $B$ for $T_0$ large enough. We divide the proof in two steps.\\
\emph{Step 1. Proof $\Phi$ maps $B$ into $B$}\\
Let $t\geq T_0$. For convenience, let $\tilde{\eta}=(\tilde{\eta_1},\tilde{\eta}_2) \in B$, $W=(w_1,w_2)$ and $H=(h_1,h_2)$. By Strichartz's estimates, we have
\begin{align}
\norm{\Phi\tilde{\eta}}_{S[t,\infty)\times S[t,\infty)}&\lesssim \norm{F(\tilde{\eta}+W)-F(W)}_{N[t,\infty)\times N[t,\infty)}\label{eq10}\\
&\quad+\norm{H}_{L^1L^2([t,\infty))\times L^1L^2([t,\infty))}\label{eq100}.
\end{align} 
To estimate \eqref{eq100}, using \eqref{eq system 1}, we have
\[
\norm{H}_{L^1L^2([t,\infty))\times L^1L^2([t,\infty))}\lesssim \int_t^{\infty}s^{-\sigma}\,ds=t^{1-\sigma}.
\]
We have 
\[
\norm{\tilde{\eta}(s)}_{L^{2\sigma}}\lesssim \norm{\tilde{\eta}(s)}_{H^1}\lesssim s^{1-\sigma},
\]
for all $s\in [t,\infty)$. It implies that 
\[
\norm{\tilde{\eta}}^{2\sigma}_{L^{2\sigma}_{t,x}(t,\infty)}\lesssim \int_t^{\infty} s^{2\sigma(1-\sigma)}ds \lesssim t^{1+2\sigma (1-\sigma)}\lesssim t^{1-2\sigma}.
\]
Moreover,
\[
\norm{\tilde{\eta}}_{L^1L^2(t,\infty)}\lesssim \int_t^{\infty} s^{1-\sigma}ds=t^{2-\sigma}.
\]
Hence, using Lemma \ref{lm1}, Hölder's inequality and the estimate $\left|\int_{-\infty}^xf(y)dy\right|\leq\norm{f}_{L^1}$, we have 
\begin{align*}
&\norm{F(\tilde{\eta}+W)-F(W)}_{N[t,\infty)\times N[t,\infty)}\\
&\lesssim \norm{F(\tilde{\eta}+W)-F(W)}_{L^1L^2[t,\infty)\times L^1L^2[t,\infty)} \\
&\lesssim \norm{\tilde{\eta}}_{L^1L^2}\left(\norm{(\tilde{\eta},W)}_{L^{\infty}L^{\infty}}^{2\sigma}+\norm{(\tilde{\eta},W)}_{L^{\infty}L^{2\sigma}}^{2\sigma}\right)\\
&\quad +\norm{W}_{L^{\infty}L^2}\norm{|\tilde{\eta}||(\tilde{\eta},W)|^{2\sigma-1}}_{L^1L^1}\\
&\lesssim \norm{\tilde{\eta}}_{L^1L^2}\left(\norm{(\tilde{\eta},W)}_{L^{\infty}L^{\infty}}^{2\sigma}+\norm{\tilde{\eta}}_{L^{\infty}L^{2\sigma}}^{2\sigma}+\norm{W}_{L^{\infty}L^2}^2\norm{W}^{2\sigma-2}_{L^{\infty}L^{\infty}}\right)\\
&\quad +\norm{W}_{L^{\infty}L^2}\left(\norm{\tilde{\eta}}_{L^{2\sigma}_{t,x}}^{2\sigma}+\norm{\tilde{\eta}}_{L^1L^2}\norm{W}_{L^{\infty}L^2}\norm{W}^{2\sigma-2}_{L^{\infty}L^{\infty}}\right)\\
&\lesssim t^{1-\sigma}.
\end{align*}
This implies that
\[
\norm{\Phi\tilde{\eta}}_{S[t,\infty)\times S[t,\infty)}\leq C t^{1-\sigma},
\]
for some constant $C>0$. Moreover,
\begin{align}
\norm{\partial_x\Phi\tilde{\eta}}_{S[t,\infty)\times S[t,\infty)}&\lesssim \norm{\partial_x(F(\tilde{\eta}+W)-F(W))}_{N[t,\infty)\times N[t,\infty)}\label{eq 221}\\
&\quad +\norm{\partial_x H}_{L^1L^2[t\,\infty)\times L^1L^2[t,\infty)}\label{eq222}.
\end{align}
To estimate \eqref{eq222}, using \eqref{eq system 2}, we have
\[
\norm{\partial_x H}_{L^1L^2([t,\infty))\times L^1L^2([t,\infty))}
\lesssim \int_t^{\infty}s^{-\sigma}\,ds=t^{1-\sigma}.
\]
Furthermore, using Lemma \ref{lm1}, we have
\begin{align*}
&\norm{\partial_x(F(\tilde{\eta}+W)-F(W))}_{N[t,\infty)\times N[t,\infty)}\\
&\lesssim\norm{\partial_x\tilde{\eta}}_{L^1L^2}(\norm{(\tilde{\eta},W)}^{2\sigma}_{L^{\infty}L^{\infty}}+\norm{\tilde{\eta}}^{2\sigma}_{L^{\infty}L^{2\sigma}}+\norm{W}_{L^{\infty}L^2}^2\norm{W}_{L^{\infty}L^{\infty}}^{2\sigma-2})\\
&\quad +\left(\norm{\tilde{\eta}}_{L^1L^2}\norm{\partial_x W}_{L^{\infty}L^{\infty}}+\norm{\tilde{\eta}}_{L^{\frac{4}{3}}L^2}\norm{\partial_x\tilde{\eta}}_{L^4L^{\infty}}\right)\norm{(\tilde{\eta},W)}^{2\sigma-1}_{L^{\infty}L^{\infty}}\\
&\quad +\norm{\partial_xW}_{L^{\infty}L^2}\norm{\tilde{\eta}}_{L^1L^2}\norm{(\tilde{\eta},W)}_{L^{\infty}L^2}\norm{(\tilde{\eta},W)}_{L^{\infty}L^{\infty}}^{2\sigma-2}+\norm{\tilde{\eta}}_{L^1L^2}\norm{(\tilde{\eta},W)}^{2\sigma}_{L^{\infty}L^{\infty}}\\
&\lesssim t^{1-\sigma}.
\end{align*}
Thus, $\Phi$ maps $B$ into $B$ if we chose $C$ large enough. \\
\emph{Step 2. Proof $\Phi$ is a contraction map on $B$}

By using \eqref{eq system 1}, \eqref{eq system 2} and a similar argument as in Step 1, we show that, for any $\tilde{\eta},\tilde{\nu}\in B$, we have
\[
\norm{\Phi\tilde{\eta}-\Phi\tilde{\nu}}_X \leq \frac{1}{2}\norm{\tilde{\eta}-\tilde{\nu}}_X,
\]
where
\[
\norm{\tilde{\eta}}_X=\sup_{t\geq T_0} t^{\sigma-1}\norm{(\tilde{\eta},\partial_x\tilde{\eta})}_{S[t,\infty)\times S[t,\infty)}.
\]
Thus, $\Phi$ is a contraction map on $B$. This implies the desired result.
\end{proof}

By the dispersive estimate, we have $\norm{R}_{L^{\infty}}\lesssim t^{\frac{-1}{2}}\norm{u^+}_{L^1}$. Thus, it is easy to check that the functions $\hat{W},\hat{H}$ satisfy the conditions \eqref{eq system 1} and \eqref{eq system 2}. Hence, using Proposition \ref{pro exist solution}, there exists a unique solution $\tilde{\eta}$ to the system \eqref{system new} satisfying the estimate \eqref{eq:estimate of tilde_eta}.

\emph{Step 2. Existence of a desired solution}\\

In this step, we show that the solution $\tilde{\eta}$ given in Step 1 satisfies a desired relation. This implies the existence of the desired solution of \eqref{eq1}. More precisely, we have the following result:
\begin{lemma}
\label{Lm:relation of solution to the system}
Let $\tilde{\eta}=(\tilde{\varphi},\tilde{\psi})$ be the solution of \eqref{system new} given in Step 1. Then,
\[
\tilde{\psi}=\partial_x\tilde{\varphi}-\frac{i}{2}(|\tilde{\varphi}+h|^{2\sigma}(\tilde{\varphi}+h)-|h|^{2\sigma}h).
\] 
\end{lemma}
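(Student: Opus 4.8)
The plan is to prove the equivalent statement that $\delta := \psi - \kappa$ vanishes identically on $[T_0,\infty)$, where I set $\varphi := \tilde{\varphi}+h$, $\psi := \tilde{\psi}+k$ and $\kappa := \partial_x\varphi - \frac{i}{2}|\varphi|^{2\sigma}\varphi$. Writing $\tilde{\kappa} := \kappa - k = \partial_x\tilde{\varphi} - \frac{i}{2}(|\tilde{\varphi}+h|^{2\sigma}(\tilde{\varphi}+h) - |h|^{2\sigma}h)$ and observing that $\delta = \tilde{\psi}-\tilde{\kappa}$, the identity $\delta\equiv 0$ is exactly the asserted relation $\tilde{\psi}=\tilde{\kappa}$. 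First I would record that $\eta=(\varphi,\psi)$ solves the unperturbed system \eqref{eqsystem}: adding $L\hat{W}=F(\hat{W})+(m,n)$, which follows from \eqref{eqof h}--\eqref{eqof k}, to the equation \eqref{system new} and using $\hat{H}=-(m,n)$ gives $L\eta=F(\eta)$, that is $L\varphi=P(\varphi,\psi)$ and $L\psi=Q(\varphi,\psi)$. This plays the role, in the present abstract setting, of the gauge relation that would hold automatically for a genuine solution of \eqref{eq1}.

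Next I would derive a closed evolution equation for $\delta$. Set $E_\varphi := L\varphi - P(\varphi,\kappa) = P(\varphi,\psi)-P(\varphi,\kappa)$. Applying the algebraic identity already used to pass from \eqref{eqof h} to \eqref{eqof k}, now with $(\varphi,\kappa)$ in the roles of $(h,k)$ and $E_\varphi$ in the role of $m$, yields
\[
L\kappa = Q(\varphi,\kappa) + \partial_x E_\varphi - \tfrac{i}{2}(\sigma+1)|\varphi|^{2\sigma}E_\varphi - \tfrac{i}{2}\sigma|\varphi|^{2(\sigma-1)}\varphi^2\overline{E_\varphi}.
\]
Subtracting this from $L\psi = Q(\varphi,\psi)$ and expressing the differences through $\delta$ — both $Q(\varphi,\psi)-Q(\varphi,\kappa)$ and $E_\varphi$ are controlled pointwise by $|\delta|$ together with an $\int_{-\infty}^x|\delta|$ contribution (since $\psi^2-\kappa^2=\delta(\psi+\kappa)$ and $\bar{\psi}-\bar{\kappa}=\overline{\delta}$), and carry no derivative of $\delta$ — I obtain
\[
L\delta = a\,\partial_x\overline{\delta} + G, \qquad a := -i\sigma|\varphi|^{2(\sigma-1)}\varphi^2,
\]
where $G$ is bounded pointwise by $|\delta|$ and $\int_{-\infty}^x|\delta|$ times powers of $|\varphi|,|\psi|,|\kappa|$. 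The only top-order contribution is the single conjugated term $a\,\partial_x\overline{\delta}$ coming from $\partial_x E_\varphi$; no unconjugated $\partial_x\delta$ survives.

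The energy estimate is then routine. Since the Laplacian contributes nothing, $\frac{d}{dt}\norm{\delta}_{L^2}^2 = 2\Im\int L\delta\,\overline{\delta}\,dx$. The dangerous term is integrated by parts via $\overline{\delta}\,\partial_x\overline{\delta}=\frac{1}{2}\partial_x(\overline{\delta}^2)$, giving $-\Im\int \partial_x a\,\overline{\delta}^2\,dx$, which is bounded by $\int|\varphi|^{2\sigma-1}|\partial_x\varphi|\,|\delta|^2$; the remaining terms are bounded by $(\norm{\varphi}_{L^\infty}^{2\sigma}+\norm{\varphi}_{L^{2\sigma}}^{2\sigma})\norm{\delta}_{L^2}^2$ after using $|\int_{-\infty}^x f|\leq\norm{f}_{L^1}$ and Hölder as in the proof of Proposition \ref{pro exist solution}. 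Splitting $\varphi=h+\tilde{\varphi}$ and using the dispersive bound $\norm{h}_{W^{1,\infty}}\lesssim t^{-1/2}$ together with \eqref{eq:estimate of tilde_eta}, one checks $\norm{\varphi}_{L^\infty}^{2\sigma}\lesssim t^{-\sigma}$ and $\norm{\varphi}_{L^{2\sigma}}^{2\sigma}\lesssim t^{1-\sigma}$, while the $\partial_x\tilde{\varphi}$ piece of the integrated-by-parts term is handled by Hölder in time using $\norm{\partial_x\tilde{\varphi}}_{L^4_tL^\infty_x([t,\infty))}\lesssim t^{1-\sigma}$ (the admissible pair $(4,\infty)$). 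Altogether $\frac{d}{dt}\norm{\delta}_{L^2}^2\leq C(t)\norm{\delta}_{L^2}^2$ with $C\in L^1([T_0,\infty))$; the slowest-decaying coefficient is the $O(t^{1-\sigma})$ one, whose integrability is precisely where the hypothesis $\sigma>2$ enters.

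Finally, $\norm{\delta(t)}_{L^2}\to 0$ as $t\to\infty$, since $\norm{\tilde{\psi}}_{L^2}\lesssim t^{1-\sigma}$ and $\norm{\tilde{\kappa}}_{L^2}\lesssim \norm{\partial_x\tilde{\varphi}}_{L^2}+\norm{\tilde{\varphi}}_{L^2}\norm{(\varphi,h)}_{L^\infty}^{2\sigma}\lesssim t^{1-\sigma}$ by \eqref{eq:estimate of tilde_eta} and the dispersive decay of $h$. With $\int_{T_0}^\infty C<\infty$, the function $t\mapsto \norm{\delta(t)}_{L^2}^2\exp(\int_{T_0}^t C(s)\,ds)$ is nondecreasing and tends to $0$, hence vanishes identically; therefore $\delta\equiv 0$ on $[T_0,\infty)$, which is the claimed relation. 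The main obstacle is the second step: organizing the algebraic reduction so that $L\delta$ closes in $\delta$ with the unique derivative term $a\,\partial_x\overline{\delta}$ amenable to integration by parts, and then verifying the time-integrability of $C(t)$, for which the $(4,\infty)$-Strichartz control of $\partial_x\tilde{\varphi}$ from \eqref{eq:estimate of tilde_eta} and the restriction $\sigma>2$ are essential.
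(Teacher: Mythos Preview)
Your proposal is correct and follows essentially the same route as the paper: both derive the equation for $L(\psi-\kappa)$ via the algebraic identity linking \eqref{eqof h} and \eqref{eqof k}, perform the $L^2$ energy estimate with integration by parts on the single top-order term $a\,\partial_x\overline{\delta}$, control the $\partial_x\tilde{\varphi}$ contribution through the $(4,\infty)$ Strichartz norm from \eqref{eq:estimate of tilde_eta}, and close by Gronwall together with $\norm{\delta(N)}_{L^2}\to 0$. One small imprecision: your lower-order remainder $G$ also carries factors of $\partial_x\varphi$ (from the non-leading pieces of $\partial_x E_\varphi$), not only powers of $|\varphi|,|\psi|,|\kappa|$; these are handled exactly as you handle the integrated-by-parts term, so the argument is unaffected.
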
 

\begin{proof}
Define $(\varphi,\psi)=(\tilde{\varphi},\tilde{\psi})+(h,k)$. Let $\tilde{\kappa}=\partial_x\tilde{\varphi}-\frac{i}{2}(|\tilde{\varphi}+h|^{2\sigma}(\tilde{\varphi}+h)-|h|^{2\sigma}h)$ and $\kappa=\tilde{\kappa}+k=\partial_x\varphi-\frac{i}{2}|\varphi|^2\varphi$. Hence, we only need to show that $\tilde{\kappa}=\tilde{\psi}$. To do so, we use the idea used in \cite{Tinpaper4} (see also \cite{HaOz94,HaOz92,Ha93}, where the authors constructed solutions by given initial data). We will consider the quantity $\norm{\tilde{\psi}(t)-\tilde{\kappa}(t)}_{L^2}^2$ and estimate the invariant of this in time. It seems that this quantity is better to use than $\norm{\psi(t)-\kappa(t)}_{L^2}^2$ since we already have the boundedness of $\tilde{\psi},\tilde{v}$ in large time by Proposition \ref{pro exist solution}. As in \cite{Tinpaper4}, we have for $N\gg t$:
\begin{align*}
L\tilde{\psi}-L\tilde{\kappa}&= L\psi-L\kappa\\
&=Q(\varphi,\tilde{\psi}+k)-Q(\varphi,\tilde{\kappa}+k)-\partial_x(P(\varphi,\tilde{\psi}+k)-P(\varphi,\tilde{\kappa}+k)\\
&\quad+\frac{i}{2}(\sigma+1)|\varphi|^{2\sigma}(P(\varphi,\tilde{\psi}+k)-P(\varphi,\tilde{\kappa}+k))\\
&\quad-\frac{i}{2}\sigma|\varphi|^{2(\sigma-1)}\varphi^2(\overline{P(\varphi,\tilde{\psi}+k)}-\overline{P(\varphi,\tilde{\kappa}+k)}).
\end{align*}
Multiplying both side of the above equality by $\overline{\tilde\psi-\tilde{v}}$, taking imaginary part and integrating over space with integration by parts (see for instance \cite{Tinpaper4}) we obtain
\begin{align}
\norm{\tilde{\psi}(t)-\tilde{\kappa}(t)}_{L^2}^2&\lesssim \norm{\tilde{\psi}(N)-\tilde{\kappa}(N)}_{L^2}^2\exp\left(\int_t^N (K_1+K_2+K_3)(s)\,ds\right),\label{eq 99}
\end{align}
where 
\begin{align*}
K_1&=\norm{\varphi}^{2\sigma-1}_{L^{\infty}}\norm{\tilde{\psi}+\tilde{\kappa}+2k}_{L^{\infty}}+\norm{|\varphi|^{2(\sigma-1)}(\tilde{\psi}+k)^2}_{L^1} +\norm{\tilde{\kappa}+k}_{L^2}\norm{|\varphi|^{2(\sigma-1)}(\tilde{\psi}+\tilde{\kappa}+2k)}_{L^2},\\
K_2&= \norm{\partial_x(|\varphi|^{2(\sigma-1)}\varphi^2)}_{L^{\infty}}+\norm{\partial_x\varphi}_{L^2}\norm{|\varphi|^{2(\sigma-1)}(\tilde{\psi}+\tilde{\kappa}+2k)}_{L^2}+\norm{|\varphi|^{2\sigma-1}(\tilde{\psi}+\tilde{\kappa}+2k)}_{L^{\infty}},\\
K_3&= \norm{\varphi}^{4\sigma}_{L^{\infty}}+\norm{|\varphi|^{2\sigma+1}}_{L^2}\norm{|\varphi|^{2(\sigma-1)}(\tilde{\psi}+\tilde{\kappa}+2k)}_{L^2}.
\end{align*}
Consider $K_1$. To estimate this term, we use the boundedness of $\tilde{\eta}$ as in Proposition \ref{pro exist solution} and the well-known boundedness  of $(h,k)$. Since, $\norm{\varphi(s)}_{L^{\infty}}\lesssim \norm{\tilde{\varphi}(s)}_{L^{\infty}}+\norm{h(s)}_{L^{\infty}}\lesssim s^{\frac{-1}{2}}$, we have
\begin{align*}
K_1&\lesssim \norm{\varphi}^{2\sigma-1}_{L^{\infty}}(\norm{\tilde{\kappa}}_{L^{\infty}}+\norm{\tilde{\psi}+2k}_{L^{\infty}})+\norm{\varphi}_{L^{\infty}}^{2(\sigma-1)}\norm{\tilde{\psi}+k}^2_{L^2}+\norm{\tilde{\kappa}+k}_{L^2}\norm{\varphi}_{L^{\infty}}^{2(\sigma-1)}\norm{\tilde{\psi}+\tilde{\kappa}+2k}_{L^2}\\
&\lesssim s^{\frac{1}{2}-\sigma}(\norm{\partial_x\tilde{\varphi}}_{L^{\infty}}+s^{-\frac{1}{2}})+s^{1-\sigma}.
\end{align*}
To estimate the contribution of the term $\norm{\partial_x\tilde{\varphi}}_{L^{\infty}}$ in $K_1$, we shall use Strichartz norm of this term. Since $\sigma>2$, we have, for all $N>t$,
\begin{align*}
\int_t^N K_1(s)ds&\lesssim \norm{K_1}_{L^1(t,\infty)}\\
&\lesssim \norm{s^{\frac{1}{2}-\sigma} \norm{\partial_x\tilde{\varphi}(s)}_{L^{\infty}}}_{L^1(t,\infty)}+\norm{s^{1-\sigma}}_{L^1(t,\infty)}\\
&\lesssim \norm{s^{\frac{1}{2}-\sigma}}_{L^{\frac{4}{3}}(t,\infty)}\norm{\partial_x\tilde{\varphi}}_{L^4(t,\infty)L^{\infty}_x}+t^{2-\sigma}\\
&\lesssim t^{\frac{5-4\sigma}{3}}t^{1-\sigma}+t^{2-\sigma}<\infty.
\end{align*}
Similarly, we show that $K_2,K_3 \in L^1(t,\infty)$, where the contribution of $\norm{\partial_x\tilde{\varphi}}_{L^{\infty}}$ in $K_2$ are estimated by using the Strichartz norm. Moreover, using \eqref{eq:estimate of tilde_eta}, $\norm{\tilde{\psi}(N)-\tilde{\kappa}(N)}_{L^2}\lesssim N^{1-\sigma}\rightarrow 0$ as $N\rightarrow\infty$. Thus, fixing $t$, the right hand side of \eqref{eq 99} converges to zero as $N$ tends to infinity. It implies that $\tilde{\psi}=\tilde{\kappa}$. This completes the proof of Lemma \ref{Lm:relation of solution to the system}. 
\end{proof}

Let $u=G_1^{-1}(\tilde{\varphi}+h)$. From Lemma \ref{Lm:relation of solution to the system}, we obtain $u$ is a solution of \eqref{eq1}. Moreover, we have
\[
\norm{u-R}_{H^1}\lesssim \norm{\tilde{\varphi}}_{H^1}\lesssim t^{1-\sigma}\rightarrow 0.
\]
This implies that $u$ scatters forward to $u^+$. It is easy to see that $u \in C([T_0,\infty),H^1) \cap L^4([T_0,\infty),W^{1,\infty})$. Moreover, $\norm{\partial_x u}_{L^{\infty}_t([T_0,\infty),L^{\infty}_x(\R))}=\norm{\psi}_{L^{\infty}_t([T_0,\infty),L^{\infty}_x(\R))}$ is well bounded.\\

Now, assume that $u^+$ satisfies the condition \eqref{eq:condition on u^+}. Since $u^+\in H^3\cap W^{2,1}$, we have $\norm{R}_{L^{\infty}} \rightarrow 0$ as $t\rightarrow\infty$ and hence 
$$\Re\int_{\R}i|R|^{2\sigma}\partial_xR \overline{R}dx\rightarrow 0,\quad \text{as } t\rightarrow\infty.$$
Thus,
\begin{align*}
\lim_{t\rightarrow\infty}S_{\omega,0}(R(t))&= \frac{1}{2}\norm{\partial_xu^+}^2_{L^2}+\frac{\omega}{2}\norm{u^+}^2_{L^2}<\mu(\omega,0),\\
\lim_{t\rightarrow\infty}K_{\omega,0}(R(t))&=\norm{\partial_xu^+}^2_{L^2}+\omega\norm{u^+}^2_{L^2} >\omega\norm{u^+}^2_{L^2}>0.
\end{align*}
Since $\norm{u(t)-R(t)}_{H^1}\rightarrow 0$ as $t\rightarrow\infty$, we have $K_{\omega,0}(u(t))\geq \omega\norm{u^+}^2_{L^2}> 0$ and $S_{\omega,0}(u(t))<\mu(\omega,0)$ for $t$ large enough. By Theorem \ref{thm:global}, $u$ exists globally in time. Moreover, by Theorem \ref{thm:LWP in H_1}, $u\in C(\R,H^1)\cap L^4_{\text{loc}}(\R,W^{1,\infty})$. It remains to show that the existence of $u$ is unique in some sense. 

\emph{Step 3. Uniqueness of the desired solution}\\

In this step, we show that the solution found in Step 2 is unique in some sense, which implies that the wave map is well defined. We need the following well known result.
\begin{theorem}[\cite{HaOz16}[Theorem 1.4]
\label{thm:LWP in H_1}
Let $\sigma\geq 1$ and $u_0\in H^1(\R)$. Then there exists $T>0$ and a unique solution $u\in C([-T,T],H^1)\cap L^4((-T,T),W^{1,\infty})$ of \eqref{eq1}. Moreover, $u$ satisfies the following properties:
\begin{itemize}
\item[(i)] $u\in L^q((-T,T),W^{1,r})$ for every admissible pair $(q,r)$.
\item[(ii)] $M(u(t))=M(u(0))$ and $E(u(t))=E(u(0))$ for all $t\in [-T,T]$.
\item[(iii)] $u$ depends continuously on $u_0$ in the following sense. If $u_0^n\rightarrow u_0$ in $H^1$ and if $u_n$ is the corresponding solution of \eqref{eq1} then $u_n$ is defined on $[-T,T]$ for $n$ large enough and  $u_n\rightarrow u$ in $C([-T,T],H^1)$.
\end{itemize}  
\end{theorem}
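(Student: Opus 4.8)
The plan is to eliminate the derivative loss in the nonlinearity $i|u|^{2\sigma}u_x$ by the Gauge transformation and to reduce \eqref{eq1} to the semilinear system \eqref{eqsystem} for $(\varphi,\psi)=(G_1(u),G_2(u))$. The decisive feature is that the right-hand sides $P(\varphi,\psi),Q(\varphi,\psi)$ contain \emph{no} derivatives of the unknowns: every occurrence of $\partial_x\varphi$ has been absorbed into the second unknown $\psi=\partial_x\varphi-\frac{i}{2}|\varphi|^{2\sigma}\varphi$, and the remaining dependence is purely algebraic together with the regularizing integral terms $\int_{-\infty}^x(\cdots)\,dy$. Thus one never differentiates the nonlinearity; the first-derivative information of $\varphi$ is carried by $\psi$ and recovered at the end from the algebraic relation $\partial_x\varphi=\psi+\frac{i}{2}|\varphi|^{2\sigma}\varphi$. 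This is precisely what makes a contraction possible, whereas a naive fixed point for \eqref{eq1} fails.

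First I would set up the Duhamel formulation for the system, $\eta(t)=e^{it\partial_x^2}\eta_0+i\int_0^t e^{i(t-s)\partial_x^2}F(\eta)(s)\,ds$, with initial datum $\eta_0=(G_1(u_0),G_2(u_0))\in H^1\times L^2$ (note $G_1(u_0)\in H^1$ and $G_2(u_0)\in L^2$ since $u_0\in H^1\hookrightarrow L^\infty$), and run a contraction in the complete metric space
\[
X_T=\bigl\{(\varphi,\psi)\in C([-T,T],H^1\times L^2):\ \norm{(\varphi,\psi)}_{S([-T,T])\times S([-T,T])}\le 2M\bigr\},
\]
for $M\gtrsim\norm{u_0}_{H^1}$, using the $L^2$-based Strichartz norm $S$ (which already contains $L^4_tL^\infty_x$, as the pair $(4,\infty)$ is admissible in the sense of this paper, so the target space $L^4((-T,T),W^{1,\infty})$ is controlled). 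The nonlinear estimates are exactly of Lemma \ref{lm1} type: combining the pointwise bounds there with Hölder's inequality and $\bigl|\int_{-\infty}^x f\bigr|\le\norm{f}_{L^1}$, and placing $\varphi\in L^4_tL^\infty_x$, $\psi\in L^\infty_tL^2_x$, one obtains $\norm{F(\eta_1)-F(\eta_2)}_{N([-T,T])\times N([-T,T])}\lesssim T^{\theta}\bigl(\norm{\eta_1}+\norm{\eta_2}\bigr)^{2\sigma}\norm{\eta_1-\eta_2}$ for some $\theta>0$, so the map contracts for $T$ small. This yields a unique local solution $(\varphi,\psi)$ of the system, and reinserting it into Strichartz estimates gives property (i) for every admissible pair.

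Next I would transfer the result back to $u$. The point requiring care is that the freely solved system does not automatically respect the constraint $\psi=\partial_x\varphi-\frac{i}{2}|\varphi|^{2\sigma}\varphi$. Since this identity holds at $t=0$ by construction of $\eta_0$, I would propagate it by an energy argument: with $\kappa=\partial_x\varphi-\frac{i}{2}|\varphi|^{2\sigma}\varphi$, computing $\frac{d}{dt}\norm{\psi(t)-\kappa(t)}_{L^2}^2$ as in the proof of Lemma \ref{Lm:relation of solution to the system} gives $\frac{d}{dt}\norm{\psi-\kappa}_{L^2}^2\lesssim g(t)\norm{\psi-\kappa}_{L^2}^2$ with $g\in L^1(-T,T)$, and Gronwall forces $\psi\equiv\kappa$ on $[-T,T]$ (the finite-interval version is easier than the scattering one, since there is no limit $N\to\infty$ to take). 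Once $\partial_x\varphi=\psi+\frac{i}{2}|\varphi|^{2\sigma}\varphi$ holds, the inverse Gauge $u=G_1^{-1}(\varphi)=\exp\bigl(-\frac{i}{2}\int_{-\infty}^x|\varphi|^{2\sigma}\,dy\bigr)\varphi$ is well defined (because $|G_1(u)|=|u|$), solves \eqref{eq1}, lies in $C([-T,T],H^1)\cap L^4((-T,T),W^{1,\infty})$, and is unique. Conservation of mass and energy (ii) is obtained by first proving the identities for smooth regularized data $u_0^n\to u_0$ (where differentiation is legitimate and $\psi\in H^1$) and passing to the limit; the continuous dependence (iii) follows from the Lipschitz bounds in the contraction together with the continuity of $u_0\mapsto\eta_0$ and of the inverse Gauge.

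I expect the main obstacle to be the derivative loss itself: the term $\partial_x$ in the nonlinearity costs one derivative that Strichartz estimates cannot recover, and the whole scheme hinges on the Gauge transformation cancelling this loss and on recovering $\partial_x\varphi$ \emph{algebraically} from $\psi$ rather than by differentiating the equation. A secondary technical difficulty is the low regularity of $z\mapsto|z|^{2\sigma}$ for $\sigma$ close to $1$: the clean $C^1$ bounds underlying Lemma \ref{lm1} require $\sigma>2$, so for the full range $\sigma\ge 1$ one must replace them by fractional or Hölder-type nonlinear estimates, which is where the argument is most delicate.
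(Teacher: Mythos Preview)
The paper does not prove this theorem; it is quoted from \cite{HaOz16} (Theorem~1.4 there) and used as a black box in Step~3 of the proof of Theorem~\ref{thm1}. There is therefore no ``paper's own proof'' to compare against.

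That said, your sketch is an accurate outline of the Hayashi--Ozawa method that underlies the cited reference: pass to the gauge variables $(\varphi,\psi)=(G_1(u),G_2(u))$, solve the derivative-free system \eqref{eqsystem} by a Strichartz contraction at the $H^1\times L^2$ level, propagate the constraint $\psi=\partial_x\varphi-\tfrac{i}{2}|\varphi|^{2\sigma}\varphi$ from $t=0$ by an energy/Gronwall argument, and invert the gauge. Your identification of the two delicate points---the derivative loss that the gauge is designed to absorb, and the limited smoothness of $z\mapsto|z|^{2\sigma}$ near $\sigma=1$---is also correct; the latter is exactly why \cite{HaOz16} (and the present paper in Lemma~\ref{lm1}) need care with the nonlinear estimates, and why the clean $C^1$ bounds of Lemma~\ref{lm1} are stated only for $\sigma>2$.
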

Using the above theorem, we have the following result.
\begin{lemma}
\label{lm:uniqueness of desired solution}
Let $u,w\in C(\R,H^1)\cap L^4_{\text{loc}}(\R,W^{1,\infty})$ be two solutions of \eqref{eq1} such that $\norm{(u_x,w_x)}_{L^{\infty}_t([T_0,\infty),L^{\infty}_x(\R))}\lesssim 1$ and
$$\sup_{t\geq T_0} t^{\sigma-1}(\norm{u(t)-R(t)}_{H^1}+\norm{w(t)-R(t)}_{H^1})\lesssim 1.$$
Then $u(t)=w(t)$ for all $t\in\R$.
\end{lemma}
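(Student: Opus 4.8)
My plan is to mirror the strategy of Lemma \ref{Lm:relation of solution to the system}: transport both solutions through the Gauge transformation to the system \eqref{eqsystem}, set up an $L^2$ energy inequality for their difference, close it by a Gronwall argument whose driving coefficient is integrable in time, and finally propagate the resulting equality at large times back to all of $\R$ by local uniqueness.

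First I would write $\eta_u=(G_1(u),G_2(u))$ and $\eta_w=(G_1(w),G_2(w))$, which by the computation giving \eqref{eqsystem} both solve $L\eta=F(\eta)$. Set $\delta=\eta_u-\eta_w=(\delta_1,\delta_2)$; note that $\delta$ is unchanged if one subtracts the common $\hat W$. Since $G_1,G_2$ are Lipschitz between the relevant norms, the scattering hypothesis \eqref{eq:rate of scattering} yields $\norm{\delta(t)}_{L^2\times L^2}\lesssim\norm{u(t)-R(t)}_{H^1}+\norm{w(t)-R(t)}_{H^1}\lesssim t^{1-\sigma}$, so in particular $\norm{\delta(N)}_{L^2\times L^2}\to 0$ as $N\to\infty$. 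The pointwise bounds I would record are $\norm{\varphi_u}_{L^\infty},\norm{\varphi_w}_{L^\infty}\lesssim t^{-1/2}$ (from $\norm{R}_{L^\infty}\lesssim t^{-1/2}$ together with the $H^1$ decay) and $\norm{\psi_u}_{L^\infty}=\norm{u_x}_{L^\infty}\lesssim 1$, $\norm{\psi_w}_{L^\infty}=\norm{w_x}_{L^\infty}\lesssim 1$ — the latter being exactly where the hypothesis on $\norm{(u_x,w_x)}_{L^\infty_x}$ enters.

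Subtracting the two systems gives $L\delta=F(\eta_u)-F(\eta_w)$. Pairing with $\bar\delta$ in $L^2\times L^2$, taking imaginary parts and integrating by parts in space, so that the second-order term $\partial_{xx}\delta$ and the most singular contributions of $F$ drop out exactly as in Lemma \ref{Lm:relation of solution to the system} and \cite{Tinpaper4}, I obtain
\[
\norm{\delta(t)}_{L^2\times L^2}^2\lesssim\norm{\delta(N)}_{L^2\times L^2}^2\exp\left(\int_t^N a(s)\,ds\right),\qquad N>t\geq T_0.
\]
The surviving coefficient $a(s)$ only involves products in which the merely bounded factors $\norm{\psi_u}_{L^\infty},\norm{\psi_w}_{L^\infty}\lesssim 1$ (or $\norm{\psi_u}_{L^2},\norm{\psi_w}_{L^2}\lesssim1$) are compensated by surplus powers of $\varphi_u,\varphi_w$, each contributing a factor $s^{-1/2}$, while the nonlocal integrals are handled through $\left|\int_{-\infty}^x f\right|\le\norm{f}_{L^1}$ and the power counting. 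The net outcome is $a(s)\lesssim s^{1-\sigma}$, which lies in $L^1(T_0,\infty)$ precisely because $\sigma>2$; the regularity needed to justify the energy identity is treated exactly as in Lemma \ref{Lm:relation of solution to the system}.

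I expect the integrability of $a$ to be the main obstacle: it hinges on the precise algebraic structure of $P,Q$ rather than the crude bound of Lemma \ref{lm1}, which over-counts powers of $\psi$ and would only produce a bounded, non-integrable coefficient, so the monomials must be tracked individually and the cancellations from the imaginary part and the integration by parts on the nonlocal terms must be exploited as in \cite{Tinpaper4}. Once this is in place, letting $N\to\infty$ in the displayed inequality and using $\norm{\delta(N)}_{L^2\times L^2}\to 0$ forces $\delta\equiv 0$ on $[T_0,\infty)$; applying $G_1^{-1}$ gives $u=w$ on $[T_0,\infty)$. Finally I would reach all of $\R$ by continuation: $u(T_0)=w(T_0)$ in $H^1$, and by the local uniqueness in Theorem \ref{thm:LWP in H_1} the set $\{t\in\R:u(t)=w(t)\}$ is both open and closed; being nonempty it equals $\R$, whence $u\equiv w$ and the wave map $u^+\mapsto u(0)$ is well defined.
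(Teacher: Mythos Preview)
Your route through the Gauge system is genuinely different from the paper's argument, and it carries a gap that the paper's direct approach avoids.

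The paper does \emph{not} pass to $(\varphi,\psi)$ at all. It works directly with \eqref{eq1}: subtracting the two equations and pairing with $\overline{u-w}$ gives
\[
\tfrac12\partial_t\norm{u-w}_{L^2}^2=-\Re\int_\R\bigl(|u|^{2\sigma}-|w|^{2\sigma}\bigr)u_x\,\overline{(u-w)}\,dx-\Re\int_\R |w|^{2\sigma}\,\overline{(u-w)}\,(u-w)_x\,dx.
\]
The second integral is the only place a derivative of the difference appears, and the elementary identity $\Re\bigl(\overline{(u-w)}(u-w)_x\bigr)=\tfrac12\partial_x|u-w|^2$ lets one integrate by parts and move the derivative onto $|w|^{2\sigma}$. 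Both surviving terms are then bounded by $\norm{(u,w)}_{L^\infty}^{2\sigma-1}\norm{(u_x,w_x)}_{L^\infty}\norm{u-w}_{L^2}^2$, so the Gronwall coefficient is $a(s)\lesssim s^{1/2-\sigma}$, integrable for every $\sigma>\tfrac32$. A short $\log M$ argument then forces $u(T_0)=w(T_0)$, and Theorem~\ref{thm:LWP in H_1} extends this to all of~$\R$.

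Your appeal to Lemma~\ref{Lm:relation of solution to the system} is misleading: in that lemma the first argument $\varphi$ is \emph{fixed} and only the second argument ($\tilde\psi$ versus $\tilde\kappa$) varies, which is exactly why the nonlocal terms there produce at worst $\norm{\varphi}_{L^\infty}^{2\sigma-2}\norm{\psi}_{L^2}^2\lesssim s^{1-\sigma}$. In your situation \emph{both} $\varphi$ and $\psi$ differ, and the $\varphi$--variation of the nonlocal integral $\int_{-\infty}^x|\varphi|^{2(\sigma-2)}\Im(\psi^2\overline\varphi^{\,2})\,dy$ costs a power of $\varphi$: after pairing with $\overline{\delta_2}$ and taking the imaginary part one is left with a term of size
\[
\norm{\psi_w}_{L^2}\norm{\delta_2}_{L^2}\cdot\norm{|\psi_w|^2|\varphi|^{2\sigma-3}|\delta_1|}_{L^1}\ \lesssim\ s^{(3-2\sigma)/2}\,\norm{\delta}_{L^2\times L^2}^2,
\]
using $\norm{\delta_1}_{L^\infty}\lesssim\norm{\delta}_{L^2\times L^2}$; the cruder Cauchy--Schwarz gives only $s^{2-\sigma}$. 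Neither rate matches your claimed $s^{1-\sigma}$, and neither is integrable on the full range $\sigma>2$ (you would need $\sigma>\tfrac52$, respectively $\sigma>3$). No further cancellation from the imaginary part is available here, because the integral is real while the prefactor $\psi_w\overline{\delta_2}$ is genuinely complex; and the Strichartz trick used for $\norm{\partial_x\tilde\varphi}_{L^\infty}$ in Lemma~\ref{Lm:relation of solution to the system} is unavailable since you have no global Strichartz control on $\eta_u,\eta_w$ on $[T_0,\infty)$. So the asserted bound $a(s)\lesssim s^{1-\sigma}$ is not justified, and your argument, as written, does not close for $2<\sigma\le\tfrac52$. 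The paper's one--line integration by parts on the original equation sidesteps the nonlocal terms entirely and is both shorter and sharper.
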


\begin{proof}
We have 
$$i(u-w)_t+\partial_x^2 (u-w)+i(|u|^{2\sigma}u_x-|w|^{2\sigma}w_x)=0.$$
Multiplying two sides of the above equation by $\overline{(u-w)}$, taking imaginary part and integrating by space, we have
$$0=\frac{1}{2}\partial_t \norm{u-w}^2_{L^2}+\Re\int_{\R} (|u|^{2\sigma}u_x-|w|^{2\sigma}w_x)\overline{(u-w)}dx.$$
This implies that
\begin{align*}
\frac{1}{2}\partial_t \norm{u-w}^2_{L^2}&=-\Re\int_{\R} (|u|^{2\sigma}u_x-|w|^{2\sigma}w_x)\overline{(u-w)}dx\\
&=-\Re\int_{\R}(|u|^{2\sigma}-|w|^{2\sigma})u_x\overline{u-w}+|w|^{2\sigma}\overline{u-w}(u_x-w_x)dx\\
&=-\Re\int_{\R}(|u|^{2\sigma}-|w|^{2\sigma})u_x\overline{u-w}dx-\frac{1}{2}\Re\int_{\R}|w|^{2\sigma}\partial_x(|u-w|^2)dx\\
&=-\Re\int_{\R}(|u|^{2\sigma}-|w|^{2\sigma})u_x\overline{u-w}dx+\frac{1}{2}\Re\int_{\R}\partial_x(|w|^{2\sigma})|u-w|^2dx.
\end{align*}
Hence,
\begin{align*}
\left|\partial_t \norm{u-w}^2_{L^2}\right|&\lesssim \int_{\R} |u-w|^2(|u|^{2\sigma-1}+|w|^{2\sigma-1})(|u_x|+|w_x|)dx\\
&\lesssim \norm{u-w}^2_{L^2}\norm{(u,w)}^{2\sigma-1}_{L^{\infty}_x}\norm{(u_x,w_x)}_{L^{\infty}_x}\\
&\lesssim \norm{u-w}^2_{L^2} t^{\frac{1}{2}-\sigma}
\end{align*}
Define $M(t)=\norm{u(t)-w(t)}^2_{L^2}$. Assume $M(T_0)> 0$. We have 
\begin{align*}
\left|\int_{T_0}^b \frac{\partial_t M}{M}ds \right|&\lesssim \int_{T_0}^b \left|\frac{\partial_t M}{M}ds\right|\lesssim \int_{T_0}^b s^{\frac{1}{2}-\sigma}ds\\
\end{align*}
This implies that
$$|\log(M(b))-\log(M(T_0))|\lesssim  T_0^{\frac{3}{2}-\sigma} - b^{\frac{3}{2}-\sigma}.$$
Since $\sigma>2$ and $M(b)$ tends to zero as $b$ tends to infinity, the above estimate gives us an contradiction by letting $b$ tends to infinity. Thus, $M(T_0)=0$ and hence, $u(T_0)=w(T_0)$. Similarly, we have $u(t)=w(t)$ for all $t\geq T_0$. By the existence and uniqueness of $C(\R,H^1)\cap L^4_{\text{loc}}(\R,W^{1,\infty})$ solutions to \eqref{eq1} stated in Theorem \ref{thm:LWP in H_1}, we have $u(t)= w(t)$ for all $t\in\R$. This completes the proof.
\end{proof}
By Lemma \ref{lm:uniqueness of desired solution}, the wave map $u^+\mapsto u_0=u(0)$ is well defined. This completes the proof of Theorem \ref{thm1}.

\subsection{Proof of Theorem \ref{thm2}}
\label{sec:Proof 2}

Let $\sigma\geq 3$. To prove Theorem \ref{thm2}, it suffices to prove that the solution $\eta$ to \eqref{rewrite system} scatters in both time directions provided $\eta(0)$ is small enough in $H^1\times H^1$. Consider the system \eqref{system new}. Assume that $\eta(0)$ is sufficiently small in $H^1\times H^1$. By Strichartz's estimates, we need only to prove that $F(\eta) \in N^1(\R) \times N^1(\R)$. Fix $T>0$ and let $I=[0,T]$. From now on, we take all space-time estimates on $I$ unless otherwise stated. By Strichartz's estimates, we have
\[
\norm{\eta}_{S^1\times S^1}\lesssim \norm{\eta(0)}_{H^1\times H^1}+\norm{F(\eta)}_{N^1\times N^1}.
\]
Using Lemma \ref{lm1}, we have
\[
|F(\eta)|\lesssim |\eta|^{2\sigma+1}+|\eta|\left|\int_{\infty}^x |\eta|^{2\sigma}\right|.
\]
Thus, noting that $S^1(\R) \hookrightarrow L^{2\sigma}_{t,x}(\R\times\R)$ when $\sigma\geq 3$, $\norm{|\eta|}_{L^{\infty}_{t,x}}\lesssim \norm{|\eta|}_{S^1}$, $\norm{\int_{-\infty}^x f(y) dy}_{L^{\infty}_x}\leq \norm{f}_{L^1}$, we have
\begin{align*}
\norm{F(\eta)}_{N\times N}&\lesssim \norm{F(\eta)}_{L^1L^2 \times L^1L^2}\\
&\lesssim \norm{|\eta|^{2\sigma+1}}_{L^1L^2}+\norm{|\eta|\int_{-\infty}^x |\eta|^{2\sigma}dy}_{L^1L^{\infty}}\\
&\lesssim \norm{|\eta|}_{L^{\infty}L^2}\norm{|\eta|}_{L^4L^{\infty}}^4\norm{|\eta|}^{2\sigma-4}_{L^{\infty}L^{\infty}}+\norm{|\eta|}_{L^{\infty}L^2}\norm{|\eta|}^{2\sigma}_{L^{2\sigma}_{t,x}}\\
&\lesssim \norm{\eta}_{S^1\times S^1}^{2\sigma+1}.
\end{align*}
Using Lemma \ref{lm1} again, we have
\[
|\partial_xF(\eta)|\lesssim |(\eta,\partial_x\eta)||\eta|^{2\sigma}+|\partial_x\eta| \left|\int_{-\infty}^x|\eta|^{2\sigma}dy\right|
\]
Thus, similarly as above, we have
\begin{align*}
\norm{\partial_xF(\eta)}_{N\times N}&\lesssim \norm{\partial_xF(\eta)}_{L^1L^2\times L^1L^2}\\
&\lesssim \norm{(|\eta|,|\partial_x\eta|)}_{L^{\infty}L^2}\norm{|\eta|}^4_{L^4L^{\infty}}\norm{|\eta|}_{L^{\infty}L^{\infty}}^{2\sigma-4}+\norm{|\partial_x\eta|}_{L^{\infty}L^2}\norm{|\eta|}_{L^{2\sigma}_{t,x}}^{2\sigma}\\
&\lesssim \norm{\eta}_{S^1\times S^1}^{2\sigma+1}.
\end{align*}
Combining the above, we have
\begin{align*}
\norm{\eta}_{S^1\times S^1}&\leq C\norm{\eta(0)}_{H^1\times H^1}+C\norm{\eta}_{S^1\times S^1}^{2\sigma+1},
\end{align*}
for some constant $C>0$. Thus, for $\norm{\eta(0)}_{H^1\times H^1}$ small enough, by continuous argument, the solution $\eta$ exists globally in time. Moreover, $\norm{\eta}_{S^1(\R) \times S^1(\R)}\lesssim \norm{\eta_0}_{H^1}$ and $F(\eta) \in  N^1(\R) \times N^1(\R)$. By classical argument, there exists $\varphi^{\pm},\psi^{\pm}$ such that 
$$\lim_{t\rightarrow \pm\infty} \norm{\eta - e^{it\partial_x^2}(\varphi^{\pm},\psi^{\pm})}_{H^1\times H^1} \rightarrow 0.$$
The unique existence of $\varphi^{\pm},\psi^{\pm}$ is followed by the uniqueness of $H^2$ solutions of \eqref{eq1} by given initial data (see for instance \cite[Theorem 1.1]{HaOz16}) and the uniqueness of the limit of $e^{-it\partial_x^2}\eta(t)$ as $t\rightarrow\pm\infty$. Hence, the proof is completed.

\bibliographystyle{abbrv}
\bibliography{paper8}

\end{document}